\newtheorem{thm}{Theorem}[section]
\newtheorem{prop}[thm]{Proposition}
\newtheorem{lem}[thm]{Lemma}
\newtheorem{oss}[thm]{Remark}
\def\RR{{\mathbb{R}}}
\def\NN{{\mathbb{N}}}
\definecolor{green(ryb)}{rgb}{0.173, 0.627, 0.173}
\journal{Journal Name}
\begin{document}

\begin{frontmatter}


\title{Exact controllability to the ground state solution for evolution equations of parabolic type via bilinear control\footnote{This paper was partly supported by the INdAM National Group for Mathematical Analysis, Probability and their Applications.}}



\author{Fatiha Alabau Boussouira}
\address{Laboratoire Jacques-Louis Lions Sorbonne Universit\'{e}, Universit\'{e} de Lorraine, 75005, Paris, France

alabau@ljll.math.upmc.fr}
\author{Piermarco Cannarsa\footnote{This author acknowledges support from the MIUR Excellence Department Project awarded to the Department of Mathematics, University of Rome Tor Vergata, CUP E83C18000100006.}} 
\address{Dipartimento di Matematica, Universit\`{a} di Roma Tor Vergata, 00133, Roma, Italy 

cannarsa@mat.uniroma2.it}
\author{Cristina Urbani\footnote{This author is grateful to University Italo Francese (Vinci Project 2018).}}
\address{Gran Sasso Science Institute, 67100, L'Aquila, Italy

Laboratoire Jacques-Louis Lions, Sorbonne Universit\'{e}, 75005, Paris, France

 cristina.urbani@gssi.it}

\begin{abstract}
In a separable Hilbert space $X$, we study the linear evolution equation
\begin{equation*}
u'(t)+Au(t)+p(t)Bu(t)=0,
\end{equation*}
where $A$ is an accretive self-adjoint linear operator, $B$ is a bounded linear operator on $X$, and $p\in L^2_{loc}(0,+\infty)$ is a bilinear control.

We give sufficient conditions in order for the above control system to be locally controllable to the ground state solution, that is, the solution of the free equation ($p\equiv0$) starting from the ground state of $A$. We also derive global controllability results in large time and discuss applications to parabolic equations in low space dimension.
\end{abstract}
\begin{keyword}
bilinear control \sep evolution equations \sep exact controllability  \sep parabolic PDEs \sep ground state

\MSC[2010] 35Q93 \sep 93C25 \sep 93C10 \sep 93B05 \sep 35K90
\end{keyword}

\end{frontmatter}


\section{Introduction}
In a separable Hilbert space $X$, consider the control system
\begin{equation}\label{u}
\left\{
\begin{array}{ll}
u'(t)+Au(t)+p(t)Bu(t)=0,& t>0\\\\
u(0)=u_0.
\end{array}\right.
\end{equation}
where $A:D(A)\subset X\to X$ is a linear self-adjoint maximal accretive operator on $X$, $B$ belongs to $\mathcal{L}(X)$, the space of all bounded linear operators on $X$, and $p(t)$ is a scalar function representing a bilinear control.

In the recent paper \cite{acu}, we have studied the stabilizability of \eqref{u} along the ground state solution of the free equation ($p\equiv0$). More precisely, denoting by $\{\lambda_k\}_{k\in\NN^*}$ the eigenvalues of $A$ and by $\{\varphi_k\}_{k\in\NN^*}$ the corresponding eigenfunctions, we call $\varphi_1$ the ground state of $A$ and $\psi_1(t)=e^{-\lambda_1 t}\varphi_1$ the ground state solution of the equation
\begin{equation*}
u'(t)+Au(t)=0.
\end{equation*}
In \cite[Theorem 3.4]{acu}, we have given sufficient conditions on $A$ and $B$ to ensure the superexponential stabilizability of \eqref{u} along $\psi_1$: for all $u_0$ in some neighborhood of $\varphi_1$ there exists a control $p\in L^2_{loc}([0,+\infty))$ such that the corresponding solution $u$ of \eqref{u} satisfies 
\begin{equation}\label{superex}
||u(t)-\psi_1(t)||\leq Me^{-(e^{\omega t}+\lambda_1 t)},\qquad \forall\,t\geq0
\end{equation}
for some constants $\omega,M>0$.
In the same paper, we have discussed several applications of the above result to parabolic operators. For instance, we have studied the stabilizability of   the equation
\begin{equation*}
u_t-u_{xx}+p(t)\mu(x)u=0
\end{equation*}
with Dirichlet or Neumann boundary conditions, as well as the equation with variable coefficients
\begin{equation*}
u_t-((1+x)^2u_x)_x+p(t)Bu=0,
\end{equation*}
or $n$-dimensional problems with radial symmetry. In \cite{cu}, we have also shown how to recover superexponential stability for a class of degenerate parabolic operators, still applying the above abstract result.

In this paper, we address the related, more delicate, issue of the exact controllability of \eqref{u} to the ground state solution $\psi_1$ via a bilinear control. Such a property, that is obviously stronger than superexponential stabilizability, holds true in more restrictive settings than those considered in \cite{acu}. Nevertheless, our new results, that we state below,  apply to all the aforementioned examples of parabolic problems.
\begin{thm}\label{teo1}
Let $A:D(A)\subset X\to X$ be a densely defined linear operator such that
\begin{equation}\label{ipA}
\begin{array}{ll}
(a) & A \mbox{ is self-adjoint},\\
(b) & A \mbox{ is accretive: }\langle Ax,x\rangle \geq 0,\,\, \forall\, x\in D(A),\\
(c) &\exists\,\,\lambda>0,\,\,(\lambda I+A)^{-1}:X\to X \mbox{ is compact},
\end{array}
\end{equation}
and suppose that there exists a constant $\alpha>0$ for which the eigenvalues of $A$ fulfill the gap condition
\begin{equation}\label{gap}
\sqrt{\lambda_{k+1}}-\sqrt{\lambda_k}\geq \alpha,\quad\forall\, k\in \NN^*.
\end{equation}
Let $B: X\to X$ be a bounded linear operator such that there exist $b,q>0$ for which
\begin{equation}\label{ipB}
\begin{array}{l}
\langle B\varphi_1,\varphi_1\rangle\neq0,\quad\mbox{and}\quad\lambda_k^q|\langle B\varphi_1,\varphi_k\rangle|\geq b\quad\forall\, k>1.
\end{array}
\end{equation}
Then, for any $T>0$, there exists a constant $R_{T}>0$ such that, for any $u_0\in B_{R_{T}}(\varphi_1)$, there exists a control $p\in L^2(0,T)$ for which system \eqref{u} is controllable to the ground state solution in time $T$. 
Furthermore, the following estimate holds
\begin{equation}
||p||_{L^2(0,T)}\leq \frac{e^{-\pi^2C_K/T_f}}{e^{2\pi^2C_K/(3T_f)}-1},
\end{equation}
where
\begin{equation}\label{Talpha}
T_f:=\min\{T,T_\alpha\},\qquad
T_\alpha:=\frac{\pi^2}{6}\min\left\{1,1/\alpha^2\right\}
\end{equation}
and $C_K$ is a suitable positive constant.
\end{thm}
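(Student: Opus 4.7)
The natural strategy is to reduce the exact controllability problem to a moment problem in $L^2(0,T)$, solve the linearized version using a biorthogonal family, and then run a fixed-point argument in a small ball to handle the nonlinear feedback. First, I would set $z(t):=u(t)-\psi_1(t)$, so that $z(0)=u_0-\varphi_1$ and $z$ solves $z'+Az+p(t)B(\psi_1+z)=0$. Projecting onto the eigenbasis $\{\varphi_k\}$ and applying Duhamel, the target condition $z(T)=0$ becomes, componentwise,
\begin{equation*}
\langle B\varphi_1,\varphi_k\rangle\int_0^T p(s)e^{(\lambda_k-\lambda_1)s}\,ds=\langle u_0-\varphi_1,\varphi_k\rangle-\int_0^T p(s)e^{\lambda_k s}\langle Bz(s),\varphi_k\rangle\,ds,
\end{equation*}
for every $k\in\mathbb{N}^*$. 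The term involving $\langle Bz(s),\varphi_k\rangle$ is quadratic in the unknowns $(p,z)$ and will be treated as a perturbation; the hypothesis $\langle B\varphi_1,\varphi_1\rangle\neq 0$ together with $\lambda_k^{q}|\langle B\varphi_1,\varphi_k\rangle|\geq b$ for $k>1$ lets me divide through and cast the equations as a moment problem with data that grow at most polynomially in $\lambda_k$.

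For the linearized problem I would invoke a classical biorthogonal family construction for the exponentials $\{e^{(\lambda_k-\lambda_1)s}\}_{k\geq 1}$ in $L^2(0,T)$. The gap assumption \eqref{gap} implies $\lambda_k\gtrsim \alpha^2k^2$, hence $\sum 1/\sqrt{\lambda_k}<\infty$, which is the Fattorini--Russell--Hansen hypothesis guaranteeing the existence of $\{\sigma_k\}\subset L^2(0,T)$ with $\int_0^T\sigma_k(s)e^{(\lambda_j-\lambda_1)s}\,ds=\delta_{jk}$ together with quantitative norm bounds of the form $\|\sigma_k\|_{L^2(0,T)}\leq C_Ke^{-c(T)\lambda_k}$ (with explicit $c(T)$ depending on $\alpha$); this is the origin of the constants $C_K$ and of the threshold time $T_\alpha$ in \eqref{Talpha}. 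Setting
\begin{equation*}
p(t)=\sum_{k\geq 1}\frac{h_k}{\langle B\varphi_1,\varphi_k\rangle}\sigma_k(t),
\end{equation*}
the polynomial factor $\lambda_k^q$ coming from $1/\langle B\varphi_1,\varphi_k\rangle$ is absorbed by the super-exponential decay of $\|\sigma_k\|_{L^2}$, yielding a bounded linear control operator $\mathcal{L}\colon X\to L^2(0,T)$, $h\mapsto p$, with the explicit bound stated in the theorem.

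With the linear step in hand, I would define the nonlinear map $\Phi$ that sends $p\in L^2(0,T)$ to the control $\Phi(p)$ produced by applying $\mathcal{L}$ to the data
\begin{equation*}
h_k(p):=\langle u_0-\varphi_1,\varphi_k\rangle-\int_0^T p(s)e^{\lambda_k s}\langle Bz_p(s),\varphi_k\rangle\,ds,
\end{equation*}
where $z_p$ is the solution of the perturbed Cauchy problem driven by $p$. Well-posedness and standard energy estimates for \eqref{u} yield $\|z_p\|_{C([0,T];X)}\leq C(\|u_0-\varphi_1\|+\|p\|_{L^2})$, so the quadratic term in $h_k$ is $O((\|u_0-\varphi_1\|+\|p\|_{L^2})\|p\|_{L^2})$. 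Combined with the bound on $\mathcal{L}$, for $R_T$ sufficiently small the map $\Phi$ preserves a small ball of $L^2(0,T)$ and is a strict contraction; its fixed point is the desired control, which by construction steers $u_0$ to $\psi_1(T)$ and satisfies the stated norm estimate. The main technical obstacle is keeping the biorthogonal-family estimates sharp enough that the polynomial loss $\lambda_k^q$ introduced by $B$ does not destroy the convergence of the series defining $p$, and matching the resulting constants with the form $e^{-\pi^2 C_K/T_f}/(e^{2\pi^2 C_K/(3T_f)}-1)$ coming from the optimal exponent in Hansen's estimates; the rest of the fixed-point argument is routine once this is achieved.
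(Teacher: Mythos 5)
Your proposal follows the Beauchard--Laurent template for the Schr\"odinger equation: solve the moment problem for the linearization with a biorthogonal family and absorb the quadratic remainder by a contraction in a small ball of $L^2(0,T)$, producing an exact control in a single shot. This is not what the paper does, and the reason is that the fixed-point step does not close in the parabolic setting. Writing the target condition componentwise, as you do, the quadratic remainder that must be fed back into the moment problem is
\begin{equation*}
\int_0^T e^{\lambda_k s}\,p(s)\,\langle Bz_p(s),\varphi_k\rangle\,ds .
\end{equation*}
For the biorthogonal series defining the updated control to converge in $L^2(0,T)$, this data must lie in the weighted space determined by $\Lambda_T$ in \eqref{lambdaT}, i.e.\ after multiplication by $e^{-\lambda_k T}e^{\bar C\sqrt{\lambda_k}/(2\alpha)}/|\langle B\varphi_1,\varphi_k\rangle|$ it must be square-summable in $k$. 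But $e^{-\lambda_k T}\int_0^T e^{\lambda_k s}|\langle Bz_p(s),\varphi_k\rangle|\,ds$ is concentrated near $s=T$, where $z_p(s)$ is merely an $X$-valued function, so $\langle Bz_p(s),\varphi_k\rangle$ has no decay in $k$ beyond $\ell^2$; the surviving factor $\lambda_k^{2q}e^{\bar C\sqrt{\lambda_k}/\alpha}$ then makes the series diverge unless $Bz_p$ has Gevrey-type regularity, which nothing provides. In the Schr\"odinger case this obstruction is absent because $|e^{i\lambda_k s}|=1$ and a hidden regularizing effect saves the day; for the heat semigroup the weight $e^{\lambda_k s}$ is a genuine loss that your contraction argument cannot absorb. (Two smaller slips: the gap condition gives $\lambda_k\gtrsim \alpha^2k^2$, hence $\sum 1/\lambda_k<\infty$, not $\sum 1/\sqrt{\lambda_k}<\infty$, which diverges; and the difficulty is not the polynomial factor $\lambda_k^q$, which is harmless, but the exponential one.)

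The paper's actual mechanism is entirely different and is the missing idea here. One applies the \emph{linear} control \eqref{pdef} to the \emph{nonlinear} system, accepts that the target is missed, and proves (Proposition \ref{prop39}) that the miss is quadratically small: $\|v(T)\|\le K(T)\|v_0\|^2$. Exactness is then recovered by iterating this estimate over infinitely many consecutive intervals of lengths $T_j=T/j^2$, whose total duration $\frac{\pi^2}{6}T$ is finite. The whole point of the proof is the competition between the doubly exponential gain $\|v(\tau_n)\|\le\bigl(e^{6C_K/\tilde T}\|v_0\|\bigr)^{2^n}$ and the blow-up $K(T_j)\le e^{C_K j^2/T}$ of the constants on shrinking intervals (Lemma \ref{lem1} and Remark \ref{oss32}); the choice $T_j=T/j^2$ and the identity $\sum_{j\ge1}j^2/2^j=6$ are what make the induction close. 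The bound $e^{-\pi^2C_K/T_f}/(e^{2\pi^2C_K/(3T_f)}-1)$ is obtained by summing the $L^2$ norms of the controls over the infinitely many intervals, not from an optimal constant in the biorthogonal estimates as you conjecture. Without this iteration-in-finite-total-time scheme (or some substitute for the failed contraction), your argument establishes at best approximate controllability with quadratic error, not exact controllability.
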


The main idea of the proof consists of applying the stability estimates of \cite{acu} on a suitable sequence of time intervals of decreasing length $T_j$, such that $\sum_{j=1}^\infty T_j<\infty$. Such a sequence, however, has to be suitably chosen in order to fit the error estimates that we take from \cite{acu}.

From the above local exact controllability property we deduce two global controllability results. In the first one, Theorem \ref{teoglobal} below, we prove that all initial states lying in a suitable strip, i.e., satisfying $|\langle u_0,\varphi_1\rangle-1| < r_1$, can be steered to the ground state solution (see Figure~\ref{fig1}). Moreover, we give a uniform estimate for the controllability time.
\begin{thm}\label{teoglobal}
Let $A$ and $B$ satisfy hypotheses \eqref{ipA}, \eqref{gap}, and \eqref{ipB}. Then there exists a constant $r_1>0$ such that for any $R>0$ there exists $T_{R}>0$ such that for all $u_0\in X$ that satisfy
\begin{equation}\label{ipu0}
\begin{array}{l}
\left|\langle u_0,\varphi_1\rangle-1\right|< r_1,\\\\
\left|\left|u_0-\langle u_0,\varphi_1\rangle\varphi_1\right|\right|\leq R,
\end{array}
\end{equation}
problem \eqref{u} is exactly controllable to the ground state solution $\psi_1(t)=e^{-\lambda_1 t}\varphi_1$ in time $T_{R}$.
\end{thm}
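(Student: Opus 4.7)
The plan is a two-stage construction: first let the system evolve freely (with $p\equiv 0$) for a time $T_1$ depending on $R$, so that the spectral gap $\lambda_2>\lambda_1$ compresses the component of $u$ orthogonal to $\varphi_1$; then, on a subsequent interval of fixed length $T^*$, apply Theorem \ref{teo1} to an appropriately rescaled system in order to hit the ground state solution exactly. The strip hypothesis on $\langle u_0,\varphi_1\rangle$ is what guarantees that the $\varphi_1$-component of the free trajectory already lies close to the ground state curve.

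To set up the constants, I would first fix an arbitrary $T^*>0$, let $R^*:=R_{T^*}$ be the local controllability radius supplied by Theorem \ref{teo1}, and then choose $r_1:=R^*/2$; crucially this depends only on $T^*$, not on $R$. Given $u_0$ satisfying \eqref{ipu0}, decompose $u_0=a_1\varphi_1+u_0^\perp$ with $a_1=\langle u_0,\varphi_1\rangle$ and $u_0^\perp\perp\varphi_1$. Setting $p\equiv 0$ on $[0,T_1]$ and introducing the rescaled state $\tilde u(s):=e^{\lambda_1 T_1}u(T_1+s)$, one computes
\begin{equation*}
\|\tilde u(0)-\varphi_1\|^2=|a_1-1|^2+\sum_{k\geq 2}|\langle u_0,\varphi_k\rangle|^2 e^{-2(\lambda_k-\lambda_1)T_1}\leq r_1^2+R^2 e^{-2(\lambda_2-\lambda_1)T_1}.
\end{equation*}
Since $\lambda_2-\lambda_1\geq \alpha^2>0$ by the gap condition \eqref{gap}, choosing $T_1$ large enough as a function of $R$ makes the right-hand side strictly smaller than $(R^*)^2$, so that $\tilde u(0)\in B_{R^*}(\varphi_1)$.

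A direct computation shows that $\tilde u$ solves \eqref{u} with shifted control $\tilde p(s):=p(T_1+s)$, and that its associated ground state solution starting from $\varphi_1$ is again $s\mapsto e^{-\lambda_1 s}\varphi_1$. Therefore Theorem \ref{teo1} provides a control $\tilde p\in L^2(0,T^*)$ steering $\tilde u(0)$ to $e^{-\lambda_1 T^*}\varphi_1$ in time $T^*$. Defining the global control to equal $0$ on $[0,T_1]$ and $\tilde p(\,\cdot-T_1)$ on $[T_1,T_1+T^*]$, and undoing the rescaling, one finds $u(T_1+T^*)=e^{-\lambda_1(T_1+T^*)}\varphi_1=\psi_1(T_1+T^*)$. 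Hence the controllability time is $T_R:=T_1+T^*$.

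The delicate aspect is getting the order of quantifiers right: $r_1$ must be universal while $T_R$ is permitted to depend on $R$. This is handled by pinning down $T^*$ (hence $R^*$, hence $r_1$) up front and letting only $T_1$ absorb the dependence on $R$, which is legitimate precisely because the spectral gap yields exponential decay of $u_0^\perp$ at a rate independent of its initial norm. The remaining step — verifying that the rescaling $\tilde u(s)=e^{\lambda_1 T_1}u(T_1+s)$ preserves the form of \eqref{u} and sends the ground state solution to itself — is a short calculation using the linearity of the system.
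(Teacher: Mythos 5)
Your proof is correct and follows essentially the same strategy as the paper's: a free-evolution phase of length depending on $R$ that uses the spectral gap to shrink the component orthogonal to $\varphi_1$ below an $R$-independent local controllability radius, followed by an application of Theorem \ref{teo1} on an interval of fixed length. The only cosmetic difference is that you handle $\lambda_1>0$ by rescaling inline at time $T_1$, whereas the paper first proves the statement for $\lambda_1=0$ (Lemma \ref{lemglobal}) and then passes to $z(t)=e^{\lambda_1 t}u(t)$ for the general case.
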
\newpage
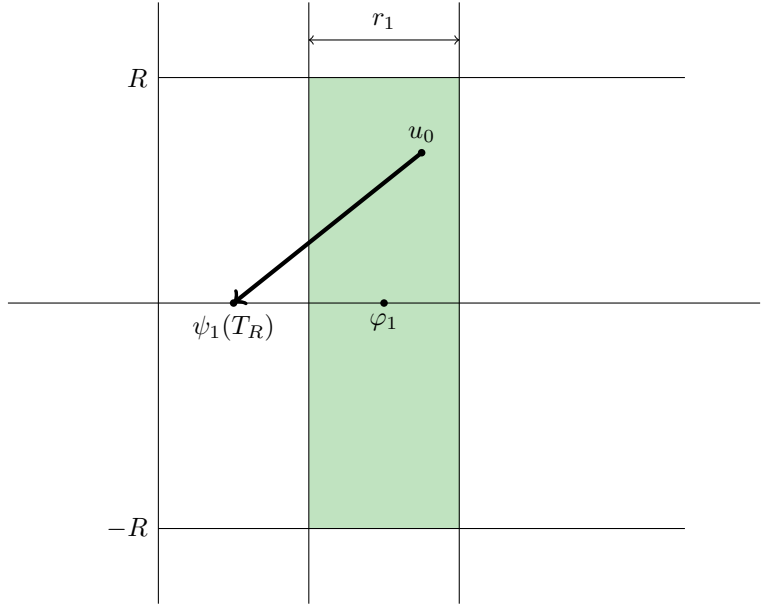
\begin{figure}[ht!]
\centering\begin{tikzpicture}
\fill[green(ryb)!30](4,-3)--(6,-3)--(6,3)--(4,3)--cycle;
\draw[] (0,0)  -- (10,0);
\draw[] (2,4) -- (2,-4);
\fill(5,0) node[below]{\footnotesize{$\varphi_1$}} circle (.05);
\fill(3,0) node[below]{\footnotesize{$\psi_1(T_R)$}} circle (.05);
\draw[] (4,-4) -- (4,4);
\draw[] (6,-4) -- (6,4);
\draw[] (2,3) node[left]{\footnotesize{$R$}} -- (9,3);
\draw[] (2,-3) node[left]{\footnotesize{$-R$}} -- (9,-3);
\draw[<->] (4,3.5) --node[above]{\footnotesize{$r_1$}} (6,3.5);
\fill(5.5,2) node[above]{\footnotesize{$u_0$}} circle (.05);
\draw[ultra thick,->] (5.5,2) -- (3,0);
\end{tikzpicture}
\caption{the colored region represents the set of initial conditions that can be steered to the ground state solution in  time $T_R$.}\label{fig1}
\end{figure}

Our second global result, Theorem \ref{teoglobal0} below, ensures the exact controllability of all initial states $u_0\in X\setminus \varphi_1^\perp$  to the evolution of their orthogonal projection along the ground state solution defined by
\begin{equation}\label{exactphi1}
\phi_1(t)=\langle u_0,\varphi_1\rangle \psi_1(t), \quad\forall\, t \geq 0,
\end{equation}
where $\psi_1$ is the ground state solution.
\begin{thm}\label{teoglobal0}
Let $A$ and $B$ satisfy hypotheses \eqref{ipA}, \eqref{gap} and \eqref{ipB}. 
Then, for any $R>0$ there exists $T_R>0$ such that for all $u_0\in X$ satisfying
\begin{equation}\label{cone}
||u_0-\langle u_0,\varphi_1\rangle\varphi_1||\leq R |\langle u_0,\varphi_1\rangle|
\end{equation}
system \eqref{u} is exactly controllable to $\phi_1$, defined in \eqref{exactphi1}, in time $T_R$.
\end{thm}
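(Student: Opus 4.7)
The plan is to reduce Theorem \ref{teoglobal0} to the already-established Theorem \ref{teoglobal} by exploiting the fact that equation \eqref{u} is linear and homogeneous in the state $u$, so that the map $u\mapsto \lambda u$ (with $\lambda$ a nonzero scalar) preserves solutions and keeps the bilinear control $p$ unchanged. The assumption \eqref{cone} is precisely the scale-invariant version of the condition \eqref{ipu0} of Theorem \ref{teoglobal} (after normalizing the first Fourier coefficient to $1$), which is exactly what makes this rescaling argument work.

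First I would dispose of the trivial case $c:=\langle u_0,\varphi_1\rangle=0$: by \eqref{cone} this forces $u_0=0$, and in that case $\phi_1\equiv 0$ is reached in any time by the solution of \eqref{u} with $u_0=0$ and $p\equiv 0$. Assume henceforth $c\neq 0$ and define
\begin{equation*}
v_0:=\frac{u_0}{c},\qquad \langle v_0,\varphi_1\rangle = 1.
\end{equation*}
In particular $|\langle v_0,\varphi_1\rangle-1|=0<r_1$, where $r_1$ is the constant provided by Theorem \ref{teoglobal}. Moreover
\begin{equation*}
\bigl\|v_0-\langle v_0,\varphi_1\rangle\varphi_1\bigr\| = \frac{1}{|c|}\bigl\|u_0-\langle u_0,\varphi_1\rangle\varphi_1\bigr\|\leq R,
\end{equation*}
by the standing assumption \eqref{cone}. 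Hence $v_0$ belongs to the set of initial data handled by Theorem \ref{teoglobal} with the same constant $R$.

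Next I would apply Theorem \ref{teoglobal} to $v_0$: there exist a time $T_R>0$ (depending on $R$ only, not on $u_0$) and a control $p\in L^2(0,T_R)$ such that the solution $v$ of
\begin{equation*}
v'(t)+Av(t)+p(t)Bv(t)=0,\qquad v(0)=v_0,
\end{equation*}
satisfies $v(T_R)=\psi_1(T_R)$. By linearity of \eqref{u} in $u$, the function $u(t):=c\,v(t)$ is the solution of \eqref{u} corresponding to the same control $p$ and initial datum $u(0)=c\,v_0=u_0$. Evaluating at $t=T_R$,
\begin{equation*}
u(T_R)=c\,v(T_R)=c\,\psi_1(T_R)=\langle u_0,\varphi_1\rangle\,\psi_1(T_R)=\phi_1(T_R),
\end{equation*}
which is exactly the desired controllability to $\phi_1$. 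Since the time $T_R$ and the control $p$ come directly from Theorem \ref{teoglobal}, all quantitative estimates carry over.

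There is essentially no analytical obstacle in this reduction: the whole content of Theorem \ref{teoglobal0} is that condition \eqref{cone} is the correct scale-invariant reformulation of \eqref{ipu0}. The only point deserving a brief check is that rescaling by the constant $c$ does not alter the control, which follows from the homogeneity of the bilinear term $p(t)Bu$ in $u$; the uniformity of $T_R$ with respect to $u_0$ in the cone \eqref{cone} is then inherited directly from the uniformity statement in Theorem \ref{teoglobal}.
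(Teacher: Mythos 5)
Your proposal is correct and follows essentially the same route as the paper: the authors also normalize by $\gamma=\langle u_0,\varphi_1\rangle$, set $\tilde{u}=u/\gamma$, observe that the rescaled problem falls under Theorem \ref{teoglobal} (with the same control $p$ by homogeneity of the bilinear term), and treat the case $u_0\in\varphi_1^\perp$ as the trivial one forcing $u_0=0$. No gaps.
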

Notice that, denoting by $\theta$ the angle between the half-lines $\RR_+\varphi_1$ and $\RR_+ u_0$, condition \eqref{cone} is equivalent to
\begin{equation*}
|\tan\theta|\leq R,
\end{equation*}
which defines a closed cone, say $Q_R$, with vertex at $0$ and axis equal to $\RR\varphi_1$ (see Figure~\ref{fig2}). Therefore, Theorem \ref{teoglobal0} ensures a uniform controllability time for all initial conditions lying in $Q_R$.
We observe that, since $R$ is any arbitrary positive constant, all initial conditions $u_0\in X\setminus \varphi_1^\perp$ can be steered to the corresponding projection to the ground state solution. Indeed, for any $u_0\in X\setminus \varphi_1^\perp$, we define 
\begin{equation*}
R_0:=\left|\left|\frac{u_0}{\langle u_0,\varphi_1\rangle}-\varphi_1\right|\right|.
\end{equation*}
Then, for any $R\geq R_0$ condition \eqref{cone} is fulfilled:
\begin{equation*}
\frac{1}{|\langle u_0,\varphi_1\rangle|}\left|\left|u_0-\langle u_0,\varphi_1\rangle\varphi_1\right|\right|=R_0\leq R.
\end{equation*}
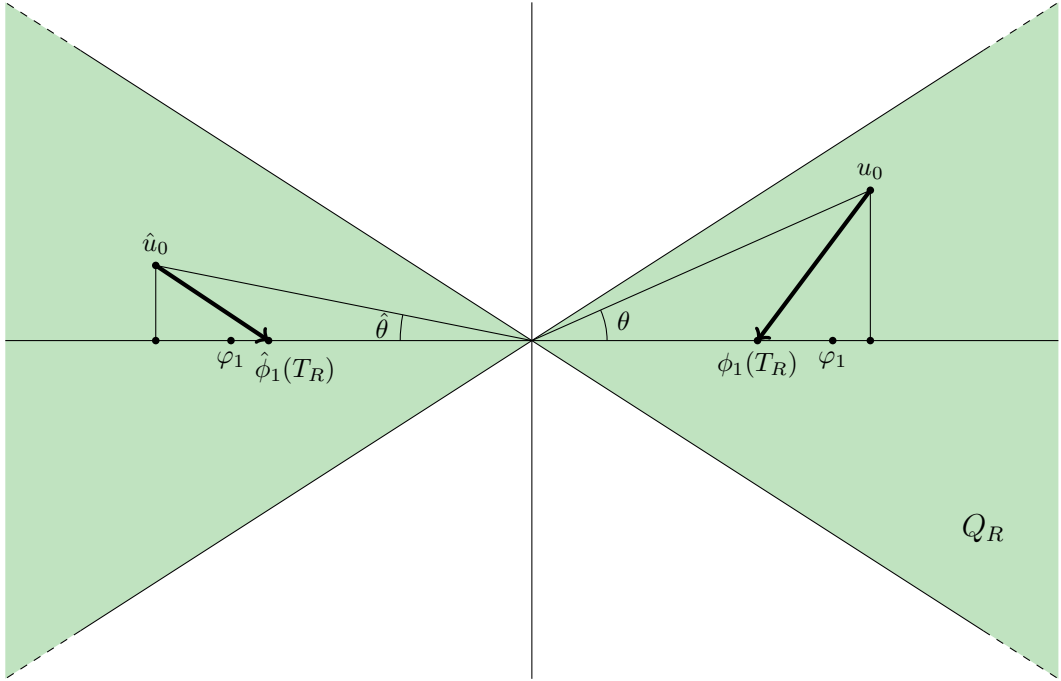
\begin{figure}[ht!]
\centering\begin{tikzpicture}
\fill[green(ryb)!30](0,-4.5)--(7,0)--(0,4.5)--cycle;
\fill[green(ryb)!30](7,0)--(14,-4.5)--(14,4.5)--cycle;
\coordinate (v2) at (7,0);
\coordinate(v4) at (11.5,2);
\coordinate(v5) at (11.5,0);
\coordinate(v8) at (2,1);
\coordinate(v9) at (2,0);
\tkzMarkAngle[size=1cm](v5,v2,v4);
\tkzLabelAngle[pos=1.25](v5,v2,v4){\footnotesize{$\theta$}};
\tkzMarkAngle[size=1.75cm](v8,v2,v9);
\tkzLabelAngle[pos=2](v8,v2,v9){\footnotesize{$\hat{\theta}$}};
\draw[] (0,0)  -- (14,0);
\draw[] (7,4.5) -- (7,-4.5);
\draw[] (7,0) -- (13,3.86);
\draw[dashed] (13,3.86) -- (14,4.5);
\draw[] (7,0) -- (13,-3.86);
\draw[dashed] (13,-3.86) -- (14,-4.5);
\draw[] (7,0) -- (1,3.86);
\draw[dashed] (1,3.86) -- (0,4.5);
\draw[] (7,0) -- (1,-3.86);
\draw[dashed] (1,-3.86) -- (0,-4.5);
\fill(11,0) node[below]{\footnotesize{$\varphi_1$}} circle (.05);
\fill(3,0) node[below]{\footnotesize{$\varphi_1$}} circle (.05);
\fill(10,0) node[below]{\footnotesize{$\phi_1(T_R)$}} circle (.05);
\fill(3.5,0) node[below]{\footnotesize{$\qquad\hat{\phi}_1(T_R)$}} circle (.05);
\fill(11.5,2) node[above]{\footnotesize{$u_0$}} circle (.05);
\fill(2,1) node[above]{\footnotesize{$\hat{u}_0$}} circle (.05);
\draw[] (11.5,2) -- (11.5,0);
\draw[] (11.5,2) -- (7,0);
\draw[] (2,1) -- (2,0);
\draw[] (2,1) -- (7,0);
\fill(11.5,0) circle (.05);
\fill(2,0) circle (.05);
\draw[ultra thick,->] (11.5,2) -- (10,0);
\draw[ultra thick,->] (2,1) -- (3.5,0);
\fill(13,-2.5) node{$Q_R$};
\end{tikzpicture}
\caption{fixed any $R>0$, the set of initial conditions exactly controllable in time $T_R$ to their projection along the ground state solution is indicated by the colored cone $Q_R$.}\label{fig2}
\end{figure}

Finally, we would like to recall part of the huge literature on bilinear control of evolution equations, referring the reader to the references in \cite{acu} for more details. A seminal paper in this field is certainly the one by Ball J.M., Marsden J.E., Slemrod M. \cite{bms}, which establishes that system \eqref{u} is not controllable. More precisely, denoting by  $u(t;p,u_0)$  the unique solution of \eqref{u},  the attainable set from $u_0$ defined by
\begin{equation*}
S(u_0)=\{ u(t;p,u_0);t\geq 0, p\in L^r_{loc}([0,+\infty),\RR),r>1\}
\end{equation*}
is shown in \cite{bms} to have a dense complement.

Among positive results, we would like to mention Beauchard K. \cite{b} for bilinear control of the wave equation and Beauchard K., Laurent C. \cite{bl} for the bilinear control of the Schr{\"o}dinger equation (see also \cite{beau} for a first result on this topic). The local exact controllability results obtained in these papers rely on linearization around the ground state, the use of the inverse mapping Theorem, and a regularizing effect which takes place in both problems. The local exact controllability is proved for any positive time for the Schr{\"o}dinger equation and for a sufficiently (optimal) large time for the wave equation. Both papers require the condition 
\begin{equation}
\label{B}
\langle B\varphi_1,\varphi_k\rangle \neq 0, \forall \, k \geq 1
\end{equation}
to be satisfied, together with a suitable asymptotic behavior with respect to the eigenvalues.

If \eqref{B} is violated then it has been first shown by Coron J.-M. \cite{cor}, for a model describing a particle in a moving box, that there exists a minimal time for local exact controllability to hold. This model couples the Schr\"odinger equation with two ordinary differential equations modeling the speed and acceleration of the box (see also Beauchard K., Coron J.-M. \cite{beaucor} for local exact controllability for large time). A further paper by Beauchard K., Morancey M. \cite{bm} for the Schr\"odinger equation extends \cite{bl} to cases for which the above condition is violated, namely there exist integers $k$ for which 
$\langle B\varphi_1,\varphi_k\rangle =0$. Under some additional assumptions, the authors prove that the system is locally exactly controllable in a sufficiently large time.


Another example of controllability to trajectories for nonlinear parabolic systems is studied in \cite{fgip}, where, however, additive controls are considered. In such an example, one can obtain controllability to free trajectories by Carleman estimate and inverse mapping arguments. Such a strategy seems hard to adapt to the current setting.

It is worth noting that the bilinear  controls we use in this paper are just scalar functions of time. This fact explains why applications mainly concern problems in low space dimension. 
A stronger control action could be obtained  by letting controls depend on time and space. We refer the reader to \cite{ck,cfk} for more on this subject.

This paper is organized as follows. In section 2, we have collected some preliminaries as well as results from \cite{acu} that we need for the proof of Theorem \ref{teo1}. Section 3 contains such a proof, while section 4 is devoted to examples of applications to parabolic problems.
\section{Preliminaries}
In this section we recall some results from \cite{acu} that are necessary for the construction of the proof of Theorem \ref{teo1}. First, fixed $T>0$, consider the following bilinear control problem
\begin{equation}\label{a1f}\left\{
\begin{array}{ll}
u'(t)+A u(t)+p(t)Bu(t)+f(t)=0,& t\in [0,T]\\\\
u(0)=u_0.
\end{array}\right.
\end{equation}
We introduce the following notation: 
\begin{equation*}\begin{array}{l}
||f||_{2,0}:=||f||_{L^2(0,T;X)},\qquad\forall\,f\in L^2(0,T;X)\\\\
||f||_{\infty,0}:=||f||_{C([0,T];X)}=\sup_{t\in [0,T]}||f(t)||,\qquad\forall\, f\in C([0,T];X).
\end{array}
\end{equation*}
The well-posedness of \eqref{a1f} is ensured by the following Proposition.
\begin{prop}\label{propa24}
Let $T>0$. If $u_0\in X$, $p\in L^2(0,T)$ and $f\in L^2(0,T;X)$, then there exists a unique mild solution of \eqref{a1f}, i.e. a function $u\in C([0,T];X)$ such that the following equality holds for every $t\in [0,T]$,

\begin{equation}
u(t)=e^{-tA }u_0-\int_0^t e^{-(t-s)A}[p(s)Bu(s)+f(s)]ds.
\end{equation}
Moreover, there exists a constant $C_1(T)>0$ such that 
\begin{equation}\label{a5}
||u||_{\infty,0}\leq C_1(T) (||u_0||+||f||_{2,0}).
\end{equation}
\end{prop}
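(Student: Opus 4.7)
The plan is to realize the mild solution as the unique fixed point of the Duhamel map
\begin{equation*}
\Phi(u)(t) := e^{-tA}u_0 - \int_0^t e^{-(t-s)A}\bigl[p(s)Bu(s)+f(s)\bigr]\,ds
\end{equation*}
on the Banach space $E := C([0,T];X)$. By hypotheses (a)--(b) of \eqref{ipA}, $-A$ generates a contraction semigroup on $X$, so $\|e^{-tA}\|_{\mathcal L(X)}\leq 1$ for every $t\geq 0$. Combined with $B\in\mathcal L(X)$ and H\"older's inequality (using $p\in L^2(0,T)$ and $f\in L^2(0,T;X)$), this guarantees that the integrand belongs to $L^1(0,T;X)$ whenever $u\in E$, and a standard dominated-convergence argument then shows that $\Phi$ maps $E$ into itself.

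To obtain existence and uniqueness in one shot, I would equip $E$ with the Bielecki-type equivalent norm $\|u\|_\lambda := \sup_{t\in[0,T]} e^{-\lambda t}\|u(t)\|$, for a parameter $\lambda>0$ to be chosen. Using $\|e^{-tA}\|\leq 1$, factoring $e^{-\lambda s}\|u(s)-v(s)\|$ out as $\|u-v\|_\lambda$, and applying Cauchy--Schwarz to the remaining integral yield
\begin{equation*}
e^{-\lambda t}\|\Phi(u)(t)-\Phi(v)(t)\| \leq \|B\|\,\|u-v\|_\lambda \int_0^t e^{-\lambda(t-s)}|p(s)|\,ds \leq \frac{\|B\|\,\|p\|_{L^2(0,T)}}{\sqrt{2\lambda}}\,\|u-v\|_\lambda.
\end{equation*}
Choosing $\lambda$ large enough to make the prefactor strictly less than $1$ turns $\Phi$ into a strict contraction on $(E,\|\cdot\|_\lambda)$, and the Banach fixed-point theorem delivers a unique $u\in E$ with $\Phi(u)=u$.

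For the a priori bound \eqref{a5}, the contractivity of the semigroup applied to the Duhamel formula, plus Cauchy--Schwarz on the $f$-term, give
\begin{equation*}
\|u(t)\| \leq \|u_0\| + \sqrt{T}\,\|f\|_{2,0} + \|B\|\int_0^t |p(s)|\,\|u(s)\|\,ds,
\end{equation*}
after which Gr\"onwall's inequality with the $L^1$ kernel $\|B\|\,|p(\cdot)|$ yields
\begin{equation*}
\|u\|_{\infty,0} \leq \bigl(\|u_0\|+\sqrt{T}\,\|f\|_{2,0}\bigr)\exp\!\bigl(\|B\|\sqrt{T}\,\|p\|_{L^2(0,T)}\bigr),
\end{equation*}
which is \eqref{a5} with an explicit constant $C_1(T)$. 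The only mild subtlety in the whole argument is that $p$ is merely $L^2$ rather than $L^\infty$ in time, which precludes a naive sup-norm Lipschitz estimate; the Bielecki weight --- or, equivalently, partitioning $[0,T]$ into sub-intervals on which $\|p\|_{L^2}$ is small and iterating the short-time solution --- is exactly the device that bypasses this issue.
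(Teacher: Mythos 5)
Your proposal is correct. The paper gives no in-text proof of this proposition (it defers to \cite{acu}), but the argument you give --- Banach fixed point for the Duhamel map in a Bielecki-weighted norm to absorb the merely $L^2$-in-time coefficient $p$, then Gronwall with the $L^1$ kernel $\|B\|\,|p(\cdot)|$ for the a priori bound --- is the standard one and is sound in every step. One remark worth keeping: your explicit constant $\exp\bigl(\|B\|\sqrt{T}\,\|p\|_{L^2(0,T)}\bigr)$ shows that $C_1$ necessarily depends on $\|p\|_{L^2(0,T)}$ and not on $T$ alone (a uniform-in-$p$ bound is impossible, as the scalar example $B=I$ with $p$ large and negative shows), so the notation $C_1(T)$ in the statement should be read as suppressing that dependence.
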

\noindent We refer to \cite{acu} for a proof of Proposition \ref{propa24}.

Our aim is to show the controllability of the following system
\begin{equation}\label{sys}\left\{\begin{array}{ll}
u'(t)+A u(t)+p(t)Bu(t)=0,& t\in[0,T]\\\\
u(0)=u_0,
\end{array}\right.
\end{equation}
to the ground state solution $\psi_1=e^{-\lambda_1 t}\varphi_1$, that is the solution of \eqref{sys} when $p=0$ and $u_0=\varphi_1$. We first consider the case $\lambda_1=0$ and prove the controllability result to the corresponding ground state solution $\psi_1=\varphi_1$. Then, we recover the result also for the case $\lambda_1>0$.

Set $v:=u-\varphi_1$, then $v$ is the solution of the following Cauchy problem
\begin{equation}\label{v}
\left\{\begin{array}{ll}
v'(t)+A v(t)+p(t)Bv(t)+p(t)B\varphi_1=0,&t\in[0,T]\\\\
v(0)=v_0=u_0-\varphi_1.
\end{array}\right.
\end{equation}
We observe that the controllability of $u$ to $\varphi_1$ is equivalent to the null controllabiliy of \eqref{v}. In order to prove this latter result, we consider the following linearized system
\begin{equation}\label{lin}\left\{\begin{array}{ll}
\bar{v}(t)'+A \bar{v}(t)+p(t)B\varphi_1=0,&t\in[0,T]\\\\
\bar{v}(0)=v_0.
\end{array}\right.
\end{equation}
and we introduce the following constant
\begin{equation}\label{lambdaT}
\Lambda_T:=\left( \sum_{k\in\NN^*}\frac{e^{-2\lambda_kT}e^{\bar{C}\sqrt{\lambda_k}/\alpha}}{|\langle B\varphi_1,\varphi_k\rangle|^2}\right)^{1/2}.
\end{equation}
where $\alpha$ is the constant in \eqref{gap}. We observe that, thanks to assumption \eqref{ipB}, $\Lambda_T$ converges for any $T>0$.
The next Proposition guarantees the null controllability of \eqref{lin} and furthermore it yields an estimate of the control $p$ with respect to the initial condition $v_0$.
\begin{prop}\label{prop34}
Let $T>0$ and let $A$ and $B$ be such that \eqref{ipA}, \eqref{gap}, \eqref{ipB} hold. Furthermore, assume that $\lambda_1=0$. Let $v_0\in X$. Then, defining $p\in L^2(0,T)$ by
\begin{equation}\label{pdef}
p(t)=\sum_{k\in\NN^*}\frac{\langle v_0,\varphi_k\rangle}{\langle B\varphi_1,\varphi_k\rangle}\sigma_k(t)
\end{equation}
where $\{\sigma_k\}_{k\in\NN^*}$ is the biorthogonal family to $\{e^{\lambda_kt}\}_{k\in\NN^*}$ given by \cite[Theorem 2.4]{cmv}, it holds that $\bar{v}(T)=0$.

Moreover, there exists a constant $C_\alpha(T)>0$ such that
\begin{equation}\label{pbound}
||p||_{L^2(0,T)}\leq C_\alpha(T)\Lambda_T||v_0||
\end{equation}
where $\Lambda_T$ is defined in \eqref{lambdaT} and $\alpha$ is the constant in \eqref{gap}.
\end{prop}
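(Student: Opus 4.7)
The natural approach is the classical moment method. First, I would expand $\bar{v}$ in the eigenbasis of $A$: writing $\bar v(t)=\sum_{k\in\NN^*}\bar v_k(t)\varphi_k$ with $\bar v_k(t)=\langle \bar v(t),\varphi_k\rangle$, and projecting \eqref{lin} onto $\varphi_k$, each Fourier coefficient satisfies the scalar ODE
\begin{equation*}
\bar v_k'(t)+\lambda_k\bar v_k(t)+p(t)\langle B\varphi_1,\varphi_k\rangle=0,\qquad \bar v_k(0)=\langle v_0,\varphi_k\rangle.
\end{equation*}
Duhamel's formula then yields $\bar v_k(T)=e^{-\lambda_k T}\bigl(\langle v_0,\varphi_k\rangle-\langle B\varphi_1,\varphi_k\rangle\int_0^T e^{\lambda_k s}p(s)\,ds\bigr)$, so imposing $\bar v(T)=0$ is equivalent to the infinite family of moment conditions
\begin{equation*}
\int_0^T e^{\lambda_k s}p(s)\,ds=\frac{\langle v_0,\varphi_k\rangle}{\langle B\varphi_1,\varphi_k\rangle},\qquad \forall\, k\in\NN^*,
\end{equation*}
which is meaningful for every $k$ thanks to \eqref{ipB} (using $\lambda_1=0$ together with $\langle B\varphi_1,\varphi_1\rangle\neq 0$ to cover the $k=1$ case).

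Next, I would invoke \cite[Theorem 2.4]{cmv} --- whose applicability is ensured by the gap condition \eqref{gap} --- to produce a family $\{\sigma_k\}\subset L^2(0,T)$ biorthogonal to $\{e^{\lambda_k t}\}$, that is, $\int_0^T e^{\lambda_k s}\sigma_j(s)\,ds=\delta_{kj}$, together with a quantitative bound of the form
\begin{equation*}
||\sigma_k||_{L^2(0,T)}\leq C_\alpha(T)\, e^{-\lambda_k T}e^{\bar C\sqrt{\lambda_k}/(2\alpha)}.
\end{equation*}
The candidate control $p$ defined by \eqref{pdef} then solves the moment problem term by term, so $\bar v(T)=0$ as soon as the series converges in $L^2(0,T)$.

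Finally, to establish both convergence and the estimate \eqref{pbound}, I would apply the triangle inequality followed by Cauchy--Schwarz to the defining series:
\begin{equation*}
||p||_{L^2(0,T)}\leq \sum_{k\in\NN^*}\frac{|\langle v_0,\varphi_k\rangle|}{|\langle B\varphi_1,\varphi_k\rangle|}\,||\sigma_k||_{L^2(0,T)}\leq ||v_0||\left(\sum_{k\in\NN^*}\frac{||\sigma_k||_{L^2(0,T)}^2}{|\langle B\varphi_1,\varphi_k\rangle|^2}\right)^{1/2},
\end{equation*}
and substituting the biorthogonal bound identifies the right-hand side with $C_\alpha(T)\Lambda_T\,||v_0||$. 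Convergence of the series defining $\Lambda_T$ in \eqref{lambdaT} follows because \eqref{ipB} controls $|\langle B\varphi_1,\varphi_k\rangle|^{-2}$ by a polynomial in $\lambda_k$, which is dominated by the exponential factor $e^{-2\lambda_k T}$ (the intermediate factor $e^{\bar C\sqrt{\lambda_k}/\alpha}$ being subleading). The main technical input is therefore the sharp $L^2$ bound on $\sigma_k$ provided by \cite[Theorem 2.4]{cmv}; once this is in hand, everything else reduces to the standard moment-method bookkeeping together with the polynomial-vs-exponential comparison allowed by \eqref{ipB}.
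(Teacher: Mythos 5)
Your argument is correct and is essentially the proof the paper relies on (the paper defers to \cite{acu} rather than reproving it): the form of \eqref{pdef}, the constant $\Lambda_T$ in \eqref{lambdaT}, and the citation of \cite[Theorem 2.4]{cmv} correspond exactly to your moment-method reduction, the biorthogonal bound $||\sigma_k||_{L^2(0,T)}\leq C_\alpha(T)e^{-\lambda_kT}e^{\bar{C}\sqrt{\lambda_k}/(2\alpha)}$, and the triangle-plus-Cauchy--Schwarz estimate giving $C_\alpha(T)\Lambda_T||v_0||$. No gaps worth flagging.
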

\noindent For the proof of Proposition \ref{prop34} we refer to \cite{acu}.

\begin{oss}
The behavior of $C_{\alpha}(\cdot)$ with respect to its argument has been studied in \cite{cmv} and is given by
\begin{equation}
C_{\alpha}^2(T)=\bar{C}\cdot\left\{\begin{array}{ll}\left(\frac{1}{T}+\frac{1}{T^2\alpha^2}\right)e^{\bar{C}/(T\alpha^2)},& T\leq\frac{1}{\alpha^2}\\\\
\bar{C}\alpha^2,&T\geq \frac{1}{\alpha^2}, \end{array}\right.
\end{equation}
where $\bar{C}>0$ is a constant independent of $T$ and $\alpha$.
\end{oss}
We now use the control $p$ built in Proposition \ref{prop34} also in the nonlinear system \eqref{v} and we give an estimate for the corresponding solution $v$.
\begin{prop}\label{prop38}
Let $A$ and $B$ satisfy hypotheses \eqref{ipA}, \eqref{gap}, \eqref{ipB} and, furthermore, assume $\lambda_1=0$. Let $p\in L^2(0,T)$ be defined by \eqref{pdef}. Then, the solution $v$ of \eqref{v} satisfies
\begin{equation}\label{unifv}
\sup_{t\in[0,T]}||v(t)||^2\leq e^{C_3(T)\Lambda_T||v_0||+C_BT}(1+C_4(T)\Lambda_T^2)||v_0||^2
\end{equation}
where $C_B\geq1$ is the norm of the operator $B$, $C_3(T):=2\sqrt{T}C_BC_\alpha(T)$, and $C_4(T):=C_BC_\alpha^2(T)$.
\end{prop}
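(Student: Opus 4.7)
The plan is to derive \eqref{unifv} by an energy estimate on the Cauchy problem \eqref{v}, followed by Gr\"onwall's inequality and the control bound from Proposition \ref{prop34}. First, I would test the equation in \eqref{v} against $v(t)$ in $X$. Since $A$ is self-adjoint and accretive, and $\|\varphi_1\|=1$, the resulting identity reads
\begin{equation*}
\tfrac{1}{2}\tfrac{d}{dt}\|v(t)\|^2 + \langle Av(t),v(t)\rangle = -p(t)\langle Bv(t),v(t)\rangle - p(t)\langle B\varphi_1,v(t)\rangle.
\end{equation*}
Dropping the nonnegative accretivity term and applying Cauchy--Schwarz together with $\|B\|\leq C_B$ gives
\begin{equation*}
\tfrac{d}{dt}\|v(t)\|^2 \leq 2C_B|p(t)|\,\|v(t)\|^2 + 2C_B|p(t)|\,\|v(t)\|.
\end{equation*}

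The key algebraic step is to split the mixed term using Young's inequality in the form $2(\sqrt{C_B}\,|p|)(\sqrt{C_B}\,\|v\|)\leq C_B|p|^2 + C_B\|v\|^2$, which produces
\begin{equation*}
\tfrac{d}{dt}\|v(t)\|^2 \leq \bigl(2C_B|p(t)|+C_B\bigr)\|v(t)\|^2 + C_B|p(t)|^2.
\end{equation*}
I would then apply the integral form of Gr\"onwall's inequality to $y(t)=\|v(t)\|^2$ to obtain
\begin{equation*}
\|v(t)\|^2 \leq \exp\Bigl(C_BT+2C_B\textstyle\int_0^T|p(s)|\,ds\Bigr)\bigl(\|v_0\|^2 + C_B\|p\|_{L^2(0,T)}^2\bigr).
\end{equation*}
The remaining steps are cosmetic: Cauchy--Schwarz yields $\int_0^T|p(s)|\,ds\leq\sqrt{T}\,\|p\|_{L^2(0,T)}$, and Proposition \ref{prop34} supplies $\|p\|_{L^2(0,T)}\leq C_\alpha(T)\Lambda_T\|v_0\|$. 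Collecting factors according to the definitions $C_3(T)=2\sqrt{T}C_BC_\alpha(T)$ and $C_4(T)=C_BC_\alpha^2(T)$ then produces exactly \eqref{unifv}.

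The main technical obstacle is the rigorous justification of the energy identity above, since \eqref{v} is only known to admit a mild solution $v\in C([0,T];X)$, which need not take values in $D(A)$ nor be classically differentiable in time. I would handle this by a standard regularization argument: replace $A$ by its Yosida approximation $A_n$, or equivalently approximate $(v_0,p)$ by smoother data with $v_0\in D(A)$ and $p$ smooth, derive the above inequality for the resulting classical solutions $v_n$, and pass to the limit using the well-posedness estimate \eqref{a5}. Since the right-hand side of \eqref{unifv} depends continuously on $\|v_0\|$ and on $\|p\|_{L^2(0,T)}$, the bound is preserved in the limit.
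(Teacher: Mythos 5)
Your argument is correct: the energy identity, the Young/Gr\"onwall step, and the substitution of the bound $\|p\|_{L^2(0,T)}\leq C_\alpha(T)\Lambda_T\|v_0\|$ from Proposition \ref{prop34} reproduce \eqref{unifv} with exactly the stated constants $C_3(T)=2\sqrt{T}C_BC_\alpha(T)$ and $C_4(T)=C_BC_\alpha^2(T)$. The paper itself defers this proof to \cite{acu}, but your method is precisely the one the authors use for the companion estimate in Proposition \ref{prop39} (multiply by the solution, drop the accretive term, Gr\"onwall, then invoke \eqref{pbound}), and your remark on justifying the energy identity for mild solutions matches the regularity assertion made there.
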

\noindent For the proof of Proposition \ref{prop38} we refer to \cite{}.

We introduce the function $w(t):=v(t)-\bar{v}(t)$ that satisfies the following Cauchy problem
\begin{equation}\label{w}
\left\{\begin{array}{ll}
w'(t)+Aw(t)+p(t)Bv(t)=0,&t\in[0,T]\\\\
w(0)=0.
\end{array}\right.
\end{equation}
We define the function $K$ on $(0,\infty)$ by
\begin{equation}\label{KT}
K^2(T):=C_Be^{2C_B\sqrt{T}+(C_B+1)T}C_4(T)\Lambda^2_T(1+C_4(T)\Lambda_T^2).
\end{equation}
In the following Proposition we estimate how close we are able to steer $v$ to $0$ in time $T$ by means of the control $p$ defined in \eqref{pdef}.
\begin{prop}\label{prop39}
Let $A$ and $B$ satisfy hypotheses \eqref{ipA}, \eqref{gap}, \eqref{ipB}, and, furthermore, we assume $\lambda_1=0$. Let $T>0$, $p$ be defined by \eqref{pdef}, and let $v_0\in X$ be such that 
\begin{equation}\label{v0}
C_\alpha(T)\Lambda_T||v_0||\leq 1.
\end{equation}
Then, it holds that
\begin{equation}\label{wT}
||w(T)||=||v(T)||\leq K(T)||v_0||^2.
\end{equation}
\end{prop}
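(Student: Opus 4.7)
The plan is to exploit Duhamel's formula for the difference $w=v-\bar{v}$, control the resulting convolution integral via the contractivity of $\{e^{-tA}\}_{t\geq 0}$, and then substitute the estimates already provided by Propositions~\ref{prop34} and \ref{prop38}. The identity $\|w(T)\|=\|v(T)\|$ in \eqref{wT} is immediate, since $\bar v(T)=0$ by Proposition~\ref{prop34}. It therefore suffices to bound the mild solution
\[
w(t)=-\int_0^t e^{-(t-s)A}\,p(s)\,Bv(s)\,ds
\]
of problem \eqref{w} at time $t=T$.

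Since $A$ is self-adjoint and accretive, $\{e^{-tA}\}_{t\geq 0}$ is a contraction semigroup; combining this with $\|B\|_{\mathcal{L}(X)}\leq C_B$ and the Cauchy--Schwarz inequality yields
\[
\|w(T)\|\leq C_B\int_0^T|p(s)|\,\|v(s)\|\,ds\leq C_B\sqrt{T}\,\|p\|_{L^2(0,T)}\sup_{s\in[0,T]}\|v(s)\|.
\]
Into this estimate I would plug the control bound \eqref{pbound}, $\|p\|_{L^2(0,T)}\leq C_\alpha(T)\Lambda_T\|v_0\|$, and the uniform bound \eqref{unifv}. The smallness hypothesis \eqref{v0} plays its key role here: it allows us to dominate the nonlinear exponent in \eqref{unifv} by $C_3(T)\Lambda_T\|v_0\|=2\sqrt{T}C_B\,C_\alpha(T)\Lambda_T\|v_0\|\leq 2\sqrt{T}C_B$, so that after taking square roots
\[
\sup_{s\in[0,T]}\|v(s)\|\leq e^{C_B\sqrt{T}+C_BT/2}\sqrt{1+C_4(T)\Lambda_T^2}\,\|v_0\|.
\]

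Assembling the three factors gives
\[
\|w(T)\|\leq C_B\sqrt{T}\,C_\alpha(T)\Lambda_T\,e^{C_B\sqrt{T}+C_BT/2}\sqrt{1+C_4(T)\Lambda_T^2}\,\|v_0\|^2,
\]
and the remaining step is a bookkeeping verification that the right-hand side is bounded by $K(T)\|v_0\|^2$ with $K(T)$ defined in \eqref{KT}. Using the identity $C_4(T)=C_B C_\alpha^2(T)$ to rewrite $C_B^2 C_\alpha^2(T)=C_B C_4(T)$ and squaring, our bound matches \eqref{KT} up to an extra factor $T\cdot e^{C_B T}$ on our side versus $e^{(C_B+1)T}$ on the target; the two agree via the elementary inequality $T\leq e^{T}$ on $[0,\infty)$. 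I do not expect any conceptual obstacle: the main care lies in invoking the smallness assumption \eqref{v0} at exactly the right moment, so that the term from \eqref{unifv} which is nonlinear in $\|v_0\|$ is converted into a harmless constant and the remaining factors combine cleanly into $K(T)\|v_0\|^2$.
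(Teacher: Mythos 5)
Your argument is correct and reaches the stated bound; the overall structure (reduce to estimating $w=v-\bar v$ at time $T$, note $\bar v(T)=0$, then feed in \eqref{pbound} and \eqref{unifv} and use \eqref{v0} to tame the exponent) is exactly that of the paper, and the only genuine difference is the technique used to estimate $w$ from its equation. The paper works with the strong formulation: it invokes the regularity $w\in H^1(0,T;X)\cap L^2(0,T;D(A))$, multiplies $w'(t)+Aw(t)+p(t)Bv(t)=0$ by $w(t)$, and applies Young's inequality and Gronwall's lemma to get $\sup_{[0,T]}\|w\|^2\le C_B^2e^{T}\|p\|^2_{L^2(0,T)}\sup_{[0,T]}\|v\|^2$. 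You instead estimate the mild solution $w(t)=-\int_0^t e^{-(t-s)A}p(s)Bv(s)\,ds$ directly via the contractivity of $e^{-tA}$ and Cauchy--Schwarz, which produces the factor $C_B^2T$ in place of $C_B^2e^{T}$; since $T\le e^{T}$ your final bookkeeping against the definition \eqref{KT} of $K(T)$ is sound, and your constant is in fact marginally sharper. What your route buys is economy: it needs nothing beyond the mild-solution formula and the contraction semigroup property, avoiding the parabolic regularity of $w$ that the paper's energy argument quietly relies on. What it gives up is negligible here, since your convolution estimate applies verbatim at every $t\in[0,T]$ and so also yields the supremum bound if one wants it.
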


\begin{proof}
Observe that $w\in C([0,T];X)$ is the mild solution of \eqref{w}. Moreover $w\in H^1(0,T;X)\cap L^2(0,T;D(A))$ and thus $w$ satisfies the equality
\begin{equation}\label{eqw}
w'(t)+Aw(t)+p(t)Bv(t)=0
\end{equation}
for almost every $t\in [0,T]$.

We multiply equation \eqref{eqw} by $w(t)$ and we obtain
\begin{equation}\begin{split}
\frac{1}{2}\frac{d}{dt}||w(t)||^2&\leq |p(t)|||Bv(t)||||w(t)||\\
&\leq \frac{1}{2}||w(t)||^2+C^2_B\frac{1}{2}|p(t)|^2||v(t)||^2.
\end{split}
\end{equation}
Therefore, applying Gronwall's inequality, taking the supremum over $[0,T]$ and using \eqref{unifv} and \eqref{pbound}, we get
\begin{equation}\begin{split}
\sup_{t\in[0,T]}||w(t)||^2&\leq C^2_Be^T||p||^2_{L^2(0,T)}\sup_{t\in[0,T]}||v(t)||^2\\
&\leq C^2_Be^{C_3(T)\Lambda_T||v_0||+C_BT+T}(1+C_4(T)\Lambda_T^2)||v_0||^2||p||^2_{L^2(0,T)}\\
&\leq C^2_BC^2_\alpha(T)\Lambda_T^2e^{C_3(T)\Lambda_T||v_0||+(C_B+1)T}(1+C_4(T)\Lambda_T^2)||v_0||^4.
\end{split}
\end{equation}
We can suppose, without loss of generality, that $C_\alpha(T)\geq 1$. Thus, thanks to \eqref{v0}, we obtain 
\begin{equation*}
\sup_{t\in[0,T]}||w(t)||^2\leq C^2_BC^2_\alpha(T)\Lambda_T^2e^{2C_B\sqrt{T}+(C_B+1)T}(1+C_4(T)\Lambda_T^2)||v_0||^4
\end{equation*}
that is equivalent to
\begin{equation*}
\sup_{t\in[0,T]}||w(t)||^2\leq K(T)^2||v_0||^4.
\end{equation*}
By the last inequality we infer that
\begin{equation}\label{vT}
||w(T)||\leq K(T)||v_0||^2.
\end{equation}
\end{proof}

\section{Proof of Theorem \ref{teo1}}
Fixed $0<T\leq\min\left\{1,1/\alpha^2\right\}$, we define the sequence $\{T_j\}_{j\in\NN^*}$ by
\begin{equation}\label{Tj}
T_j:=T/j^2,
\end{equation}
and the time steps
\begin{equation}\label{taun}
\tau_n=\sum_{j=1}^n T_j,\qquad\forall n\in\NN,
\end{equation}
with the convention that $\sum_{j=1}^0T_j=0$. Notice that $\sum_{j=1}^\infty T_j=\frac{\pi^2}{6}T$.

The proof of our result relies on the construction of the solution $v$ of \eqref{v} in consecutive intervals of the form $\left[\tau_n,\tau_{n+1}\right]$ for which we are able to perform an iterate estimate of \eqref{wT}.

First, through the following Lemma, we study the behavior of the constant $K(T)$ with respect to $T$.

We define the function
\begin{equation}\label{G}
G_M(T):=\frac{M}{T^2}e^{M/T}\sum_{k=1}^{\infty}\frac{e^{-2\lambda_kT+M\sqrt{\lambda_k}}}{|\langle B\varphi_1,\varphi_k\rangle|^2},\qquad 0<T\leq 1
\end{equation}
where $M$ is a positive constant.
\begin{lem}\label{lem1}
Let $A:D(A)\subset X\to X$ be such that \eqref{ipA} and \eqref{gap} hold and $B:X\to X$ be such that \eqref{ipB} holds. Then, there exists a suitable positive constant $C_M$ such that
\begin{equation}
G_M(T)\leq e^{C_M/T}, \qquad \forall\,\, 0<T\leq 1.
\end{equation}
\end{lem}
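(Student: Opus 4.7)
The plan is to bound the infinite sum defining $G_M(T)$ by isolating the $k=1$ term and controlling the tail via the hypotheses \eqref{ipB} and \eqref{gap}. The prefactor $\frac{M}{T^2}e^{M/T}$ is already of the form $e^{C/T}$ (after absorbing $1/T^2$, see below), so the real work is to show that the remaining series is also of exponential-of-$1/T$ type.

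I would first use assumption \eqref{ipB} to bound $|\langle B\varphi_1,\varphi_k\rangle|^{-2}\leq \lambda_k^{2q}/b^2$ for $k\geq 2$. Next I would complete the square in the exponent through the elementary inequality
\begin{equation*}
-2\lambda_k T + M\sqrt{\lambda_k} = -T\lambda_k + \bigl(-T\lambda_k + M\sqrt{\lambda_k}\bigr) \leq -T\lambda_k + \frac{M^2}{4T},
\end{equation*}
which separates a decaying factor $e^{-T\lambda_k}$ from a blow-up factor $e^{M^2/(4T)}$. Then I would apply the supremum bound $\sup_{x\geq 0} x^{2q}e^{-Tx/2} = (4q/T)^{2q}e^{-2q}$ to extract the polynomial weight:
\begin{equation*}
\lambda_k^{2q}e^{-T\lambda_k} \leq \frac{(4q)^{2q}e^{-2q}}{T^{2q}}\, e^{-T\lambda_k/2}.
\end{equation*}
The gap condition \eqref{gap} yields $\sqrt{\lambda_k}\geq (k-1)\alpha$, so the resulting theta-type series is dominated by
\begin{equation*}
\sum_{k=2}^\infty e^{-T\lambda_k/2} \leq \sum_{n=1}^\infty e^{-Tn^2\alpha^2/2} \leq 1 + \frac{1}{\alpha}\sqrt{\frac{\pi}{2T}},
\end{equation*}
via a standard comparison with the Gaussian integral $\int_0^\infty e^{-T\alpha^2 x^2/2}\,dx$. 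Combining these estimates gives a bound on the tail of the form $\frac{C_1}{T^{2q+1/2}}\,e^{M^2/(4T)}$, while the $k=1$ contribution is controlled by a constant depending only on $\lambda_1$ and $\langle B\varphi_1,\varphi_1\rangle$.

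To conclude, I would multiply by the outer factor $\frac{M}{T^2}e^{M/T}$ and collapse every polynomial-in-$1/T$ weight into an exponential using the elementary inequality $T^{-n}\leq e^{n/T}$ valid on $(0,1]$ (since $\log(1/T)\leq 1/T$ there). The exponentials $e^{M/T}$, $e^{M^2/(4T)}$, and the absorbed $e^{(2q+5/2)/T}$ then combine into a single $e^{C_M/T}$, for a constant $C_M$ depending on $M,q,b,\alpha,\lambda_1$, and $\langle B\varphi_1,\varphi_1\rangle$. The main obstacle is purely bookkeeping: keeping track of the several polynomial-in-$1/T$ factors produced by the supremum step, the theta-sum comparison, and the prefactor $1/T^2$, and verifying that none of them spoils the clean exponential form on the interval $(0,1]$.
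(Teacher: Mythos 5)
Your argument is correct, and it reaches the conclusion by a genuinely different route for the key series estimate. You and the paper agree on the first two moves: bounding $|\langle B\varphi_1,\varphi_k\rangle|^{-2}\leq\lambda_k^{2q}/b^2$ for $k\geq2$ via \eqref{ipB}, and splitting off $e^{M^2/(4T)}$ by maximizing $-\lambda T+M\sqrt{\lambda}$. The divergence is in how the remaining sum $\sum_k\lambda_k^{2q}e^{-\lambda_kT}$ is controlled. The paper compares it with the integral $\int_0^\infty\lambda^{2q}e^{-\lambda T}\,d\lambda=\Gamma(2q+1)/T^{1+2q}$, which requires locating the index $k_1(T)$ where $g(\lambda)=\lambda^{2q}e^{-\lambda T}$ peaks, exploiting monotonicity on each side of the peak, and using the gap condition in the form $\lambda_{k+1}-\lambda_k\geq\alpha(\sqrt{\lambda_2}+\sqrt{\lambda_1})$ to dominate each term by a piece of the integral. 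You instead sacrifice half of the exponential decay to absorb the polynomial weight via $\sup_{x\geq0}x^{2q}e^{-Tx/2}=(4q/T)^{2q}e^{-2q}$, and then sum the surviving Gaussian tail $\sum e^{-T\lambda_k/2}$ directly, using the gap condition in the cleaner form $\sqrt{\lambda_k}\geq(k-1)\alpha$ together with a comparison to $\int_0^\infty e^{-T\alpha^2x^2/2}dx$. Your version avoids the bookkeeping around $k_1(T)$ and the two exceptional terms near the maximum, is slightly more elementary, and even yields a marginally better polynomial order ($T^{-2q-1/2}$ versus the paper's $T^{-1-2q}$), though this is immaterial since every power of $1/T$ is ultimately swallowed by $e^{C_M/T}$ through $T^{-n}\leq e^{n/T}$ on $(0,1]$ — the same final absorption step the paper uses. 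The treatment of the $k=1$ term as a fixed constant (using $\lambda_1\geq0$ and $\langle B\varphi_1,\varphi_1\rangle\neq0$) is also fine, and in fact slightly cleaner than the paper's bound $e^{M^2/(8T)}$ for that term.
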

\begin{proof}
Thanks to assumption \eqref{ipB}, we have that
\begin{equation}\
\begin{split}
G_M(T)&=\frac{M}{T^2}e^{M/T}\sum_{k=1}^{\infty}\frac{e^{-2\lambda_kT+M\sqrt{\lambda_k}}}{|\langle B\varphi_1,\varphi_k\rangle|^2}\\
&\leq \frac{M}{T^2}e^{M/T}\left[\frac{e^{M^2/(8T)}}{|\langle B\varphi_1,\varphi_1\rangle|^2}+\frac{1}{b^2}\sum_{k=2}^{\infty}\left(\lambda_k^{2q}e^{-\lambda_kT}\right)e^{-\lambda_kT+M\sqrt{\lambda_k}}\right].
\end{split}
\end{equation}
For any $\lambda\geq0$ we set $f(\lambda)=e^{-\lambda T+M\sqrt{\lambda}}$. The maximum value of $f$ is attained at $\lambda=\left(\frac{M}{2T}\right)^2$. So, we can bound $G_M(T)$ as follows
\begin{equation}\label{gamma4}
G_M(T)\leq\frac{M}{T^2}e^{M/T}\left[\frac{e^{M^2/(8T)}}{|\langle B\varphi_1,\varphi_1\rangle|^2}+\frac{e^{M^2/(4T)}}{b^2}\sum_{k=1}^{\infty}\lambda_k^{2q}e^{-\lambda_kT}\right].
\end{equation}

Now, for any $\lambda\geq0$ we define the function $g(\lambda)=\lambda^{2q}e^{-\lambda T}$. Its derivative is given by
\begin{equation*}
g'(\lambda)=(2q-\lambda T)\lambda^{2q-1}e^{-\lambda T}
\end{equation*}
and therefore we deduce that
\begin{equation*}
g(\lambda)\mbox{ is }\left\{\begin{array}{ll}\mbox{increasing } & \mbox{if }0\leq \lambda<(2q)/T\\\\
\mbox{decreasing}&\mbox{if } \lambda\geq (2q)/T  \end{array}\right.
\end{equation*}
and $g$ has a maximum at $\lambda=(2q)/T$. We define the following index:
\begin{equation*}
k_1:=k_1(T)=\sup\left\{k\in\NN^*\,:\,\lambda_k\leq \frac{2q}{T}\right\}
\end{equation*}
Note that $k_1(T)$ goes to $\infty$ as $T$ converges to $0$.
We can rewrite the sum in \eqref{gamma4} as follows
\begin{equation}\label{s123}
\sum_{k=1}^{\infty}\lambda_k^{2q}e^{-\lambda_kT}=\sum_{k\leq k_1-1}\lambda_k^{2q}e^{-\lambda_kT}+\sum_{k_1\leq k\leq k_1+1}\lambda_k^{2q}e^{-\lambda_kT}+\sum_{k\geq k_1+2}\lambda_k^{2q}e^{-\lambda_kT}.
\end{equation}

For any $k\leq k_1-1$, we have
\begin{equation}\label{s1}
\int_{\lambda_k}^{\lambda_{k+1}}\lambda^{2q}e^{-\lambda T}d\lambda\geq (\lambda_{k+1}-\lambda_k)\lambda_k^{2q}e^{-\lambda_k T}\geq \alpha(\sqrt{\lambda_2}+\sqrt{\lambda_1})\lambda_k^{2q}e^{-\lambda_k T}
\end{equation}
and for any $k\geq k_1+2$
\begin{equation}\label{s3}
\int_{\lambda_{k-1}}^{\lambda_k}\lambda^{2q}e^{-\lambda T}d\lambda\geq (\lambda_k-\lambda_{k-1})\lambda_k^{2q}e^{-\lambda_k T}\geq \alpha(\sqrt{\lambda_2}+\sqrt{\lambda_1})\lambda_k^{2q}e^{-\lambda_k T}.
\end{equation}
So, by using estimates \eqref{s1} and \eqref{s3}, \eqref{s123} becomes
\begin{equation}\label{s2i}
\sum_{k=1}^{\infty}\lambda_k^{2q}e^{-\lambda_kT}=\frac{2}{\alpha(\sqrt{\lambda_2}+\sqrt{\lambda_1})}\int_0^\infty \lambda^{2q}e^{-\lambda T}d\lambda+\sum_{k_1\leq k\leq k_1+1}\lambda_k^{2q}e^{-\lambda_kT}.
\end{equation}
Furthermore, recalling that $g$ has a maximum for $\lambda=2q/T$, it holds that
\begin{equation}\label{s2}
k=k_1,k_1+1\quad\Rightarrow\quad \lambda_k^{2q}e^{-\lambda_k T}\leq\left(2q/T\right)^{2q}e^{-2q}.
\end{equation}

Finally, the integral term of \eqref{s2i} can be rewritten as
\begin{equation}\label{s13}
\int_0^\infty\lambda^{2q}e^{-\lambda T}d\lambda=\frac{1}{T}\int_0^{\infty}\left(\frac{s}{T}\right)^{2q}e^{-s}ds=\frac{1}{T^{1+2q}}\int_0^{\infty}s^{2q}e^{-s}ds=\frac{\Gamma(2q+1)}{T^{1+2q}},
\end{equation}
where by $\Gamma(\cdot)$ we indicate the Euler integral of the second kind. 

Therefore, we conclude from \eqref{s2} and \eqref{s13} that there exist two constants $C_q,C_{q,\alpha}>0$ such that 
\begin{equation}
\sum_{k=1}^\infty \lambda_k^{2q}e^{-\lambda_k T}\leq \frac{C_q}{T^{2q}}+\frac{C_{\alpha,q}}{T^{1+2q}}.
\end{equation}
We use this last bound to prove that there exists $C_M>0$ such that
\begin{equation*}
G_M(T)\leq\frac{M}{T^2}e^{M/T}\left[\frac{e^{M^2/(8T)}}{|\langle B\varphi_1,\varphi_1\rangle|^2}+\frac{e^{M^2/(4T)}}{b^2}\left(\frac{C_q}{T^{2q}}+\frac{C_{\alpha,q}}{T^{1+2q}}\right)\right]\leq e^{C_M/T}, \qquad 0<T\leq 1
\end{equation*}
as claimed.
\end{proof}

\begin{oss}\label{oss32}
We recall that $K(\cdot)$ is defined by
$$K^2(T):=C^2_Be^{2C_B\sqrt{T}+(C_B+1)T}C^2_\alpha(T)\Lambda^2_T(1+C_BC^2_\alpha(T)\Lambda_T^2).$$
For any $0<T\leq\min\left\{1,1/\alpha^2\right\}$, $C^2_\alpha(\cdot)$ is given by
$$C_\alpha^2(T)=\bar{C}\left(\frac{1}{T}+\frac{1}{T^2\alpha^2}\right)e^{\bar{C}/(\alpha^2 T)}.$$
Thus, we have the following bound for $K(\cdot)$
\begin{equation}
K(T)^2\leq C^2_Be^{2C_B\sqrt{T}+(C_B+1)T}G_M(T)\left(1+C_BG_M(T)\right),
\end{equation}
where $G_M(\cdot)$ is defined by \eqref{G} and the subscribed $M$ is given by $M=\bar{C}\left(1+\frac{1}{\alpha^2}\right)$.

Thanks to Lemma \ref{lem1}, we infer that there exists a suitable constant $C_K>0$ such that $C_K>C_M$ and 
\begin{equation}
K(T)\leq e^{C_K/T}, \qquad\forall\, T\in(0,1].
\end{equation}
\end{oss}

In the following Proposition we prove that it is possible to iterate the construction of $v$ in consecutive time intervals of the form $[\tau_{n-1},\tau_n]$.

\begin{prop}\label{prop33}
Let $0<T\leq\min\left\{1,1/\alpha^2\right\}$, and consider the sequence $(T_j)_{j\in\NN^*}$ defined by \eqref{Tj}.  Let $v_0\in X$ for which $||v_0||< e^{-6C_K/T}$, let $A:D(A)\subset X\to X$ be such that \eqref{ipA} and \eqref{gap} hold and let $B:X\to X$ satisfies \eqref{ipB}.  Moreover, we assume that $\lambda_1=0$. Then, for every $n\in\NN^*$, problem
\begin{equation}\label{exact44}
\left\{\begin{array}{ll}
v'(t)+Av(t)+p(t)Bv(t)+p(t)B\varphi_1=0,&t\in \left[\tau_{n-1},\tau_n\right]\\\\
v\left(\tau_{n-1}\right)=v_{n-1},
\end{array}
\right.
\end{equation}
where $v_{n-1}$ is determined by induction from the previous steps, $p\in L^2(\tau_{n-1},\tau_n)$ is given by
\begin{equation}\label{pn}
p(t)=\sum_{k=1}^\infty\frac{\langle v_{n-1},\varphi_k\rangle}{\langle B\varphi_1,\varphi_k\rangle}\sigma_k(t-\tau_{n-1}),
\end{equation}
admits a unique mild solution $v\in C\left(\left[\tau_{n-1},\tau_n\right],X\right)$ that satisfies
\begin{equation}
\left|\left| v\left(\tau_n\right)\right|\right|\leq e^{\left(\sum_{j=1}^n 2^{n-j}j^2-2^n6\right)C_K/T},
\end{equation}
where the time steps $\{\tau_n\}_{n\in\NN}$ are defined by \eqref{taun}.
\end{prop}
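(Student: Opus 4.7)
The plan is to argue by induction on $n$, applying Proposition \ref{prop39} on each subinterval $[\tau_{n-1},\tau_n]$ of length $T_n=T/n^2$. After translating time, the restriction of \eqref{exact44} to such an interval is of the same form as \eqref{v} with initial datum $v_{n-1}:=v(\tau_{n-1})$ and control \eqref{pdef}, so Proposition \ref{prop39} with $T$ replaced by $T_n$ applies: whenever the smallness condition $C_\alpha(T_n)\Lambda_{T_n}||v_{n-1}||\leq 1$ holds, one obtains $||v(\tau_n)||\leq K(T_n)||v_{n-1}||^2$.

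Before starting, I would bound the relevant constants uniformly in $n$. Remark \ref{oss32} gives $K(T_n)\leq e^{C_K/T_n}=e^{C_Kn^2/T}$. For the smallness factor, $C_\alpha^2(T_n)\Lambda_{T_n}^2$ is dominated by $G_M(T_n)$ with $M=\bar{C}(1+1/\alpha^2)$, so Lemma \ref{lem1} combined with $C_K>C_M$ yields $C_\alpha(T_n)\Lambda_{T_n}\leq e^{C_Kn^2/(2T)}$.

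For the base case $n=1$ we have $T_1=T$ and the assumption $||v_0||<e^{-6C_K/T}\leq e^{-C_K/(2T)}$ secures the hypothesis of Proposition \ref{prop39}, which then produces $||v(\tau_1)||\leq e^{C_K/T}||v_0||^2<e^{-11C_K/T}$, in agreement with the exponent $2^0\cdot 1-2\cdot 6=-11$. For the inductive step, set $S_m:=\sum_{j=1}^{m}2^{m-j}j^2$, so the induction hypothesis reads $||v_{n-1}||\leq e^{(S_{n-1}-6\cdot 2^{n-1})C_K/T}$. Combining with the bound on $C_\alpha(T_n)\Lambda_{T_n}$, the smallness condition reduces to $\tfrac{n^2}{2}+S_{n-1}-6\cdot 2^{n-1}\leq 0$. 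Using the identity $\sum_{j\geq 1}j^2/2^j=6$ one writes $S_{n-1}=6\cdot 2^{n-1}-R_{n-1}$, where $R_{n-1}=\sum_{k\geq 1}(n-1+k)^2/2^k$; since the $k=1$ term alone already gives $R_{n-1}\geq n^2/2$, the smallness condition holds. Proposition \ref{prop39} then delivers
\[
||v(\tau_n)||\leq e^{C_Kn^2/T}||v_{n-1}||^2\leq e^{(n^2+2S_{n-1}-6\cdot 2^n)C_K/T}=e^{(S_n-6\cdot 2^n)C_K/T},
\]
via the recursion $S_n=n^2+2S_{n-1}$, closing the induction.

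The main obstacle is the tension between the quadratic improvement $||v_{n-1}||^2$ coming from Proposition \ref{prop39} and the blow-up $K(T_n)=e^{C_Kn^2/T}$ of its constant as the intervals shrink. The scheme works because the iterated exponents collapse into the telescoping sum $\sum_{j\geq 1}j^2/2^j=6$; the factor $6$ arising there is precisely what fixes the threshold $e^{-6C_K/T}$ in the hypothesis on $||v_0||$, and a smaller budget would fail to absorb the blow-up of $K(T_n)$ uniformly in $n$.
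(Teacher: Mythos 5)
Your proposal is correct and follows essentially the same route as the paper: induction on $n$, applying Proposition \ref{prop39} on each shifted interval $[0,T_n]$ after verifying the smallness condition $C_\alpha(T_n)\Lambda_{T_n}\|v_{n-1}\|\leq 1$ via $K(T_n)\leq e^{C_K n^2/T}$ and the sum $\sum_{j\geq 1}j^2/2^j=6$. The only (harmless) difference is bookkeeping: the paper verifies the smallness condition through the closed-form identity $\sum_{j=0}^n j^2/2^j=2^{-n}(-n^2-4n+6(2^n-1))$, arriving at the explicit bound $e^{-(2n+3)C_K/T}$, whereas you use the tail estimate $R_{n-1}\geq n^2/2$ of the same series, which is an equivalent computation.
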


\begin{proof}
To prove the result, we proceed by induction on $n$. For $n=1$, by Proposition \ref{prop39}, the hypothesis on $v_0$ and Remark \ref{oss32}, $v$ it satisfies
\begin{equation*}
||v(T)||\leq K(T)||v_0||^2\leq e^{-11C_K/T}.
\end{equation*}
Now, suppose the statement is true for all indices $k\leq n-1$, we show the validity for index $n$. Therefore, by inductive hypothesis, the solution $v$ has been constructed in consecutive intervals until $\left[\tau_{n-2},\tau_{n-1}\right]$ and it satisfies
\begin{equation*}
\left|\left| v\left(\tau_{n-1}\right)\right|\right|\leq e^{\left(\sum_{j=1}^{n-1} 2^{n-1-j}j^2-2^{n-1}6\right)C_K/T}.
\end{equation*}
Hence,
\begin{equation}\label{induc}
\begin{split}
C_\alpha(T_n)\Lambda_{T_n}\left|\left| v\left(\tau_{n-1}\right)\right|\right|&\leq e^{C_Mn^2/T}e^{\left(\sum_{j=1}^{n-1} 2^{n-1-j}j^2-2^{n-1}6\right)C_K/T}\\
&\leq e^{(n^2+(-(n-1)^2-4(n-1)+2^{n-1}6-6-2^{n-1}6)C_K/T}\\
&=e^{-(2n+3)C_K/T},
\end{split}
\end{equation}
where we have used that $C_M<C_K$ and the identity
\begin{equation}
\sum_{j=0}^n\frac{j^2}{2^j}=2^{-n}(-n^2-4n+6(2^n-1)), \qquad n\geq0,
\end{equation}
which can be easily checked by induction.

Consider problem \eqref{exact44} with $v_{n-1}$ the solution built in the previous interval, evaluated at $\tau_{n-1}$. By the change of variables $s=t-\tau_{n-1}$, we shift \eqref{exact44} into the interval $[0,T_n]$. We introduce the functions $\tilde{v}(s)=v\left(s+\tau_{n-1}\right)$ and $\tilde{p}(s)=p\left(s+\tau_{n-1}\right)$ and we rewrite \eqref{exact44} as
\begin{equation}\label{tildevtn-1tn}
\left\{\begin{array}{ll}
\tilde{v}'(s)+A\tilde{v}(s)+\tilde{p}(s)B\tilde{v}(s)+\tilde{p}(s)B\varphi_1=0,&s\in \left[0,T_n\right]\\\\
\tilde{v}(0)=v_{n-1}.
\end{array}
\right.
\end{equation}
From \eqref{induc} it follows that $C_\alpha(T_n)\Lambda_{T_n}\left|\left| v(\tau_{n-1})\right|\right|<1$ and thus we can apply Proposition \ref{prop39} to problem \eqref{tildevtn-1tn}, obtaining
\begin{equation}
||\tilde{v}(T_n)||\leq K(T_n)||v_{n-1}||^2.
\end{equation}
We shift back the problem into the original interval $\left[\tau_{n-1},\tau_{n}\right]$ and we get
\begin{equation}
\left|\left| v\left(\tau_{n}\right)\right|\right|\leq K(T_n)||v_{n-1}||^2.
\end{equation}
By inductive hypothesis, we can estimate $\left|\left| v\left(\tau_{n}\right)\right|\right|$ as follows
\begin{equation}
\left|\left| v\left(\tau_{n}\right)\right|\right|\leq e^{C_Kn^2/T}\left[e^{\left(\sum_{j=1}^{n-1} 2^{n-1-j}j^2-2^{n-1}6\right)C_K/T}\right]^2=e^{\left(\sum_{j=1}^n 2^{n-j}j^2-2^n6\right)C_K/T}.
\end{equation}
\end{proof}

\begin{prop}
Let $0<T\leq\min\left\{1,1/\alpha^2\right\}$ and consider the sequence $(T_j)_{j\in\NN^*}$  defined by \eqref{Tj}. Let $v_0\in X$ be such that $||v_0||< e^{-6C_K/T}$, let $A:D(A)\subset X\to X$ be such that \eqref{ipA} and \eqref{gap} hold and let $B:X\to X$ satisfies \eqref{ipB}. Let $p\in L^2(\tau_{n-1},\tau_n)$ be defined by \eqref{pn}.  Moreover, we assume that $\lambda_1=0$. Then, the solution of \eqref{exact44} satisfies
\begin{equation}\label{estimvn}
\left|\left|v\left(\tau_n\right)\right|\right|\leq \prod_{j=1}^nK(T_j)^{2^{n-j}}||v_0||^{2^n},
\end{equation}
for all $n\in\NN^*$.
\end{prop}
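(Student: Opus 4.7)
The plan is a straightforward induction on $n$, leveraging the work already done in Proposition \ref{prop33}. The key observation is that Proposition \ref{prop33} already verifies the small-data hypothesis $C_\alpha(T_n)\Lambda_{T_n}\|v(\tau_{n-1})\|\leq 1$ required at each step to legitimately invoke Proposition \ref{prop39}. Here, rather than tracking the explicit bound $e^{(\sum_{j=1}^n 2^{n-j}j^2 - 2^n 6)C_K/T}$, we keep the estimate in the multiplicative form $K(T_j)$ and $\|v_0\|$.

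For the base case $n=1$, we have $\tau_0=0$ and $\tau_1 = T_1 = T$. Since $\|v_0\| < e^{-6C_K/T}$, by Remark \ref{oss32} one has $C_\alpha(T)\Lambda_T\|v_0\| \leq 1$, so Proposition \ref{prop39} applies on $[0,T]$ with the control $p$ defined by \eqref{pn} (which reduces to \eqref{pdef} when $n=1$), yielding
\begin{equation*}
\|v(\tau_1)\|=\|v(T)\|\leq K(T_1)\|v_0\|^2,
\end{equation*}
which matches \eqref{estimvn} for $n=1$.

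For the inductive step, assume the estimate holds up to index $n-1$, so
\begin{equation*}
\|v(\tau_{n-1})\|\leq \prod_{j=1}^{n-1}K(T_j)^{2^{n-1-j}}\|v_0\|^{2^{n-1}}.
\end{equation*}
Proposition \ref{prop33} guarantees that problem \eqref{exact44} on $[\tau_{n-1},\tau_n]$ with control given by \eqref{pn} is well-posed and that the smallness condition $C_\alpha(T_n)\Lambda_{T_n}\|v(\tau_{n-1})\|\leq 1$ holds. After the change of variable $s=t-\tau_{n-1}$ that shifts the interval to $[0,T_n]$ (exactly as done in the proof of Proposition \ref{prop33}), Proposition \ref{prop39} applies and gives
\begin{equation*}
\|v(\tau_n)\| \leq K(T_n)\,\|v(\tau_{n-1})\|^2.
\end{equation*}
Substituting the inductive hypothesis and using $2\cdot 2^{n-1-j}=2^{n-j}$ for $j\leq n-1$ yields
\begin{equation*}
\|v(\tau_n)\|\leq K(T_n)\prod_{j=1}^{n-1}K(T_j)^{2^{n-j}}\|v_0\|^{2^n}=\prod_{j=1}^{n}K(T_j)^{2^{n-j}}\|v_0\|^{2^n},
\end{equation*}
which closes the induction.

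There is no real obstacle here: all the analytic content (well-posedness on each sub-interval, the quadratic-in-initial-data error bound, and the verification that the hypothesis of Proposition \ref{prop39} propagates through the iteration) is already contained in Propositions \ref{prop39} and \ref{prop33}. The only thing to be careful about is bookkeeping the exponents of $2$: doubling the product $\prod_{j=1}^{n-1}K(T_j)^{2^{n-1-j}}$ promotes each exponent $2^{n-1-j}$ to $2^{n-j}$, and multiplying by the new factor $K(T_n) = K(T_n)^{2^{n-n}}$ completes the product up to $j=n$.
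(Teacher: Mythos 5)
Your proof is correct and follows essentially the same route as the paper: induction on $n$, with the base case handled by Proposition \ref{prop39} under the smallness assumption on $v_0$, and the inductive step using the shift to $[0,T_n]$ together with the bound $C_\alpha(T_n)\Lambda_{T_n}\|v(\tau_{n-1})\|\leq 1$ supplied by Proposition \ref{prop33} to apply Proposition \ref{prop39} again. The exponent bookkeeping matches the paper's argument exactly.
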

\begin{proof}
We prove formula \eqref{estimvn} by induction on $n$. The case $n=1$ follows from Proposition \ref{prop39}, thanks to the assumption $||v_0||< e^{-6C_K/T}$. Now, suppose the formula holds for all the indices less than or equal to $n-1$. We prove it for $n$ as follows. We consider problem \eqref{exact44} and in order to shift it in the interval $[0,T_n]$, we introduce the variable $s=t-\tau_{n-1}$ as before and the functions $\tilde{v}(s)=v\left(s+\tau_{n-1}\right)$ and $\tilde{p}(s)=p\left(s+\tau_{n-1}\right)$. Thus, \eqref{exact44} can be rewritten as
\begin{equation}\label{vns}
\left\{\begin{array}{ll}
\tilde{v}'(s)+A\tilde{v}(s)+\tilde{p}(s)B\tilde{v}(s)+\tilde{p}(s)B\varphi_1=0,&s\in \left[0,T_n\right]\\\\
\tilde{v}(0)=v_{n-1}.
\end{array}
\right.
\end{equation}
By Proposition \ref{prop33}, it holds that $C_\alpha(T_n)\Lambda_{T_n}||v_{n-1}||\leq 1$
and hence, we can apply Proposition \ref{prop39} considering as final time $T_n$ (instead of $T$), obtaining that
\begin{equation}
\left|\left|v\left(\tau_{n}\right)\right|\right|=||\tilde{v}(T_n)||\leq K(T_n)||v_{n-1}||^2.
\end{equation}
Finally, by inductive hypothesis, we conclude that
\begin{equation}
\left|\left|v\left(\tau_{n}\right)\right|\right|\leq K(T_n)||v_{n-1}||^2\leq K(T_n)\left[\prod_{j=1}^{n-1}K(T_j)^{2^{n-1-j}}||v_0||^{2^{n-1}}\right]^2
\end{equation}
that is equivalent to formula \eqref{estimvn}.
\end{proof}

We are now ready to prove our main result. 
\begin{proof}[Proof of Theorem \ref{teo1}]
We start the proof by considering the case in which $\lambda_1=0$. Let $T>0$ and let $T_\alpha$ and $T_f$ be defined by \eqref{Talpha}. We define $\tilde{T}=\frac{6}{\pi^2}T_f$ and $R_{T}:=e^{-\pi^2C_K/T_f}$. Observe that $0<\tilde{T}\leq1$ and we define the time steps $\{\tau_n\}_{n\in\NN}$ as in \eqref{taun} with $T_j:=\tilde{T}/j^2$. Fixed $v_0\in B_{R_{T}}(0)$, we apply \eqref{estimvn} to obtain
\begin{equation}\label{final}
\begin{split}
\left|\left|v\left(\tau_{n}\right)\right|\right|&\leq\prod_{j=1}^nK(T_j)^{2^{n-j}}||v_0||^{2^n}\\
&\leq \prod_{j=1}^n\left(e^{C_Kj^2/\tilde{T}}\right)^{2^{n-j}}||v_0||^{2^n}\\
&=e^{C_K2^n/\tilde{T}\sum_{j=1}^nj^2/2^j}||v_0||^{2^n}\\
&\leq e^{C_K2^n/\tilde{T}\sum_{j=1}^\infty j^2/2^j}||v_0||^{2^n}\\
&\leq \left(e^{6C_K/\tilde{T}}||v_0||\right)^{2^n}
\end{split}
\end{equation}
where we have used that $\sum_{j=1}^\infty j^2/2^j=6$.
We take the limit as $n\to \infty$ of \eqref{final} and we get
\begin{equation}
\left|\left|u\left(\frac{\pi^2}{6}\tilde{T}\right)-\varphi_1\right|\right|=\left|\left|v\left(\frac{\pi^2}{6}\tilde{T}\right)\right|\right|=||v(T_f)||\leq 0
\end{equation}
since $||v_0||< e^{-\pi^2C_K/T_f}=e^{-6C_K/\tilde{T}}$. This means that, we have built a control $p\in L^2_{loc}([0,\infty))$, defined by
\begin{equation}
p(t)=\left\{\begin{array}{ll}
\sum_{n=0}^\infty p_n(t)\chi_{\left[\tau_n ,\tau_{n+1}\right]}(t),& t\in \left(0,T_f\right],\\\\
0,&t\in(T_f,+\infty)
\end{array}\right.
\end{equation}
where
\begin{equation}
p_n(t)=\sum_{k=1}^\infty \frac{\langle v\left(\tau_n\right),\varphi_k\rangle}{\langle B\varphi_1,\varphi_k\rangle}\sigma_k\left( t-\tau_n \right),\qquad\forall n\in\NN,
\end{equation} 
such that the solution $u$ of \eqref{u} reaches the ground state solution $\varphi_1$ in time $T$, and stays on it forever.

Observe that, thanks to \eqref{pbound} and \eqref{induc}, we are able to yield a bound for the $L^2$-norm of such a control:
\begin{equation}\label{pestimate}
\begin{split}
||p||^2_{L^2\left(0,T\right)}&=\sum_{n=0}^\infty ||p_n||^2_{L^2\left(\tau_n,\tau_{n+1}\right)}\\
&\leq \sum_{n=0}^\infty \left(C_{\alpha}(T_{n+1})\Lambda_{T_{n+1}}\left|\left|v\left(\tau_n \right)\right|\right|\right)^2\\
&\leq \sum_{n=0}^\infty e^{-2(2(n+1)+3)C_K/\tilde{T}}\\
&=\frac{e^{-6C_K/\tilde{T}}}{e^{4C_K/\tilde{T}}-1}\\
&=\frac{e^{-\pi^2C_K/T_f}}{e^{2\pi^2C_K/(3T_f)}-1}
\end{split}
\end{equation}

Now we face the case $\lambda_1>0$. We define the operator
\begin{equation*}
A_1:=A-\lambda_1I.
\end{equation*}
It is possible to check that $A_1$ satisfies \eqref{ipA} and moreover it has the same eigenfuctions, $\{\varphi_k\}_{k\in\NN^*}$, of $A$, while the eigenvalues are given by
\begin{equation}
\mu_k=\lambda_k-\lambda_1, \qquad\forall k\in\NN^*.
\end{equation}
In particular, $\mu_1=0$ and furthermore, $\{\mu_k\}_{k\in\NN^*}$ satisfy the same gap condition \eqref{gap} fulfilled by $\{\lambda_k\}_{k\in\NN^*}$.

We define the function $z(t)=e^{\lambda_1 t}u(t)$, where $u$ is the solution of \eqref{u}. Then, $z$ solves the following problem
\begin{equation}\label{z}
\left\{\begin{array}{ll}
z'(t)+A_1z(t)+p(t)Bz(t)=0,&t>0,\\\\
z(0)=u_0.
\end{array}\right.
\end{equation}
For any $T>0$, we define $T_f$ as in \eqref{Talpha} and the constant $R_{T}:=e^{-\pi^2C_K/T_f}$. We deduce from the previous analysis that, if $u_0\in B_{R_{T}}(\varphi_1)$, then there exists a control $p\in L^2([0,+\infty))$ that steers the solution $z$ to the ground state solution $\varphi_1$ in time $T_f\leq T$. This implies the exact controllability of $u$ to the ground state solution $\psi_1(t)=e^{-\lambda_1 t}\varphi_1$: indeed,
{\footnotesize{\begin{equation*}
\left|\left|u\left(T_f\right)-\psi_1\left(T_f\right)\right|\right|=\left|\left|e^{-\lambda_1T_f}z\left(T_f\right)-e^{-\lambda_1T_f}\varphi_1\right|\right|=e^{-\lambda_1T_f}\left|\left|z\left(T_f\right)-\varphi_1\right|\right|=0.
\end{equation*}}}
This concludes the proof of our Theorem.
\end{proof}
\begin{oss}
We observe that, from \eqref{pestimate}, it follows that $||p||_{L^2(0,T_{f})}\to 0$ as $T_f\to 0$. This fact is not surprising because as $T_f$ approaches $0$, also the size of the neighborhood where the initial condition can be chosen goes to zero.
\end{oss}

\section{Proof of Theorems \ref{teoglobal} and \ref{teoglobal0}}
Before proving Theorem \ref{teoglobal}, let us show a preliminary result that demonstrates the statement in the case of a strictly accretive operator.
\begin{lem}\label{lemglobal}
Let $A$ and $B$ satisfy hypotheses \eqref{ipA}, \eqref{gap} and \eqref{ipB}.  Furthermore, we assume $\lambda_1=0$. Then, there exists a constant $r_1>0$ such that for any $R>0$ there exists $T_{R}>0$ such that for all $v_0\in X$ that satisfy
\begin{equation}\label{ipv0}
\begin{array}{l}
\left|\langle v_0,\varphi_1\rangle\right| <  r_1,\\\\
\left|\left|v_0-\langle v_0,\varphi_1\rangle\varphi_1\right|\right|\leq R,
\end{array}
\end{equation}
problem \eqref{v} is null controllable in time $T_{R}$.
\end{lem}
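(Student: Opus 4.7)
The strategy is to split the evolution into two phases: a passive phase on $[0,t_1]$ where I switch off the control, and an active phase on $[t_1,t_1+T_\alpha]$ where I apply the local null controllability already established in Theorem~\ref{teo1}. The idea is that since $\lambda_1=0$, the component $\langle v_0,\varphi_1\rangle \varphi_1$ is invariant under the free evolution, while all other Fourier modes are damped uniformly; one can therefore wait long enough to shrink the orthogonal part of $v_0$ below the local radius $R_{T_\alpha}$, and then close the argument with the local result.

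The first step is to fix a universal threshold: set $r_1 := R_{T_\alpha}/2$, where $R_{T_\alpha} = e^{-\pi^2 C_K/T_\alpha}$ is the local controllability radius from Theorem~\ref{teo1} at the maximal useful time $T=T_\alpha$. Note that $r_1$ depends only on $A,B$ and, crucially, not on $R$. Next, on $[0,t_1]$ I take $p\equiv 0$, so that $v(t)=e^{-tA}v_0$ and, expanding in the eigenbasis of $A$ (using $\lambda_1=0$ and the gap condition $\lambda_2\geq\alpha^2$), one obtains
$$
\bigl\|v(t)-\langle v_0,\varphi_1\rangle\varphi_1\bigr\|^2 \;=\; \sum_{k\geq 2} e^{-2\lambda_k t}\,|\langle v_0,\varphi_k\rangle|^2 \;\leq\; e^{-2\alpha^2 t}\,R^2.
$$
Choosing $t_1:=\frac{1}{\alpha^2}\log\max\{R/r_1,1\}$ therefore forces $\|v(t_1)-\langle v_0,\varphi_1\rangle\varphi_1\|\leq r_1$, and combined with the hypothesis $|\langle v_0,\varphi_1\rangle|<r_1$ this gives $\|v(t_1)\|<2r_1=R_{T_\alpha}$.

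In the second phase, I apply Theorem~\ref{teo1} with $T=T_\alpha$ to the Cauchy problem restarted at time $t_1$ with datum $v(t_1)$, which lies strictly inside the ball $B_{R_{T_\alpha}}(0)$. This yields a control $p\in L^2(t_1,t_1+T_\alpha)$ driving $v$ to zero at time $T_R:=t_1+T_\alpha$. Extending by $0$ on $[0,t_1]$ gives the required control on $[0,T_R]$, and $T_R$ depends only on $R$ (and on the fixed data $A,B,\alpha$), as claimed.

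The main technical content is already contained in Theorem~\ref{teo1}; what is new is the observation that free parabolic smoothing contracts the orthogonal complement of $\varphi_1$ at a uniform exponential rate, which allows the arbitrarily large parameter $R$ to be absorbed into the waiting time $t_1$ at merely logarithmic cost. The point requiring some care is to ensure that $r_1$ can be chosen as a single universal constant independent of $R$; the choice $r_1=R_{T_\alpha}/2$ works precisely because $T_\alpha$ is the largest time for which $R_T$ stops improving, so there is no competition between the two phases.
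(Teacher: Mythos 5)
Your proof is correct and follows essentially the same two-phase strategy as the paper's: a passive waiting period with $p\equiv0$ that exponentially damps the $\varphi_1^\perp$-component (the paper uses the rate $\lambda_2$, you use the equivalent bound $\alpha^2\leq\lambda_2$ from the gap condition), followed by an application of the local result of Theorem~\ref{teo1} on a fixed final interval. The only differences are cosmetic: the paper fixes $T=1$ for the local phase and obtains $\|v(t_R)\|<\sqrt{2}\,r_1$ via orthogonality, whereas you fix $T=T_\alpha$ and use the triangle inequality to get $\|v(t_1)\|<2r_1$.
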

\begin{proof}
{\bf First step.} We fix $T=1$. Thanks to Theorem \ref{teo1}, there exists a constant $r_1>0$ such that if $||u_1(0)- \varphi_1|| < \sqrt{2}r_1$ then there exists a control $p_1\in L^2(0,1)$ for which the solution $u_1$ of \eqref{u} on $[0,1]$ with $p$ replaced by $p_1$, satisfies $u_1(1)=\varphi_1$. We set $v_1=u_1- \varphi_1$ on $[0,1]$. We deduce that
if $||v_1(0)|| < \sqrt{2}r_1$ then there exists a control $p_1\in L^2(0,1)$ for which the solution $v_1$ of \eqref{v} on $[0,1]$ with $p$ replaced by $p_1$, satisfies $v_1(1)=0$.

\smallskip\noindent
{\bf Second step.} Let $v_0\in X$ be the initial condition of \eqref{v}. We decompose $v_0$ as follows
\begin{equation*}
v_0=\langle v_0,\varphi_1\rangle\varphi_1+v_{0,1},
\end{equation*}
where $v_{0,1}\in \varphi_1^\perp$ and we suppose that $\left|\langle v_0,\varphi_1\rangle\right| < r_1$. We define $t_{R}$ as
\begin{equation}\label{trR}
t_{R}:=\frac{1}{2\lambda_2}\log{\left(\frac{R^2}{r_1^2}\right)}
\end{equation} 
and in the time interval $[0,t_{R}]$ we take the control $p\equiv0$. Then, for all $t\in [0,t_{R}]$, we have that
\begin{equation*}
||v(t)||^2\leq \left|\left|e^{-tA}\left(\langle v_0,\varphi_1\rangle\varphi_1+v_{0,1}\right)\right|\right|^2\leq \left|\langle v_0,\varphi_1\rangle\right|^2+e^{-2\lambda_2t}\left|\left|v_{0,1}\right|\right|^2 < r_1^2+e^{-2\lambda_2t}R^2.
\end{equation*}
In particular, for $t=t_{R}$, it holds that $||v(t_{R})||^2 < 2 r^2_1$.

Now, we define $T_{R}:=t_{R}+1$ and set $v_1(0)=v(t_R)$. Thanks to the first step of the proof, there exists a control $p_1\in L^2(0,1)$, such that $v_1(1)=0$, where $v_1$ is the solution of \eqref{v} on $[0,1]$ with $p$ replaced by $p_1$.

Then $v(t)=v_1(t-t_R)$ solves \eqref{v} in the time interval $(t_{R},T_{R}]$ with the control $p_1(t-t_{R})$ that steers the solution $v$ to $0$ at $T_{R}$.
\end{proof}
\begin{proof}[Proof (of Theorem \ref{teoglobal})]
We start with the case $\lambda_1=0$. Let $u_0\in X$ that satisfies \eqref{ipu0}. Set $v(t):=u(t)-\varphi_1$, then $v$ satisfies \eqref{v} and moreover $v_0:=v(0)=u_0-\varphi_0$ fulfills \eqref{ipv0}. Thus, by Lemma \ref{lemglobal}, problem \eqref{u} is exactly controllable to the ground state solution $\psi_1
\equiv\varphi_1$ in time $T_{R}$.

Now, we consider the case $\lambda_1>0$. As in the proof of Theorem \ref{teo1}, we introduce the variable $z(t)=e^{\lambda_1t}u(t)$ that solves problem \eqref{z}. For such a system, since the first eigenvalue of $A_1$ is equal $0$, we have the exact controllability to $\varphi_1$ in time $T_{R}$. Namely $z(T_{R})=\varphi_1$, that is equivalent to the exact controllability of $u$ to $\psi_1$:

\begin{equation}
z(T_{R})=\varphi_1\
\quad\Longleftrightarrow\quad
e^{\lambda_1T_{R}}u(T_{R})=\varphi_1
\quad\Longleftrightarrow\quad
u(T_{R})=\psi_1(T_{R}).
\end{equation}
The proof is thus complete.
\end{proof}

The proof of Theorem \ref{teoglobal0} easily follows from Theorem \eqref{teoglobal}.
\begin{proof}[Proof (of Theorem \ref{teoglobal0})]
Suppose that $\gamma:=\langle u_0,\varphi_1\rangle\neq0$. We decompose $u_0$ as $u_0=\gamma\varphi_1+\zeta_1$, with $\zeta_1:=u_0-\langle u_0,\varphi_1\rangle\varphi_1\in\varphi_1^\perp$ and define $\tilde{u}(t):=u(t)/\gamma$. Hence, $\tilde{u}$ solves
\begin{equation}\label{utildeglobal}
\left\{
\begin{array}{ll}
\tilde{u}'(t)+A\tilde{u}(t)+p(t)B\tilde{u}(t)=0,& t>0\\
\tilde{u}(0)=\varphi_1+\tilde{\zeta_1},
\end{array}\right.
\end{equation}
where $\tilde{\zeta_1}:=\zeta_1/\gamma$.

We apply Theorem \ref{teoglobal} to \eqref{utildeglobal} to deduce the existence of $T_R>0$ such that $\tilde{u}(T_R)=\psi_1(T_R)$. Therefore, the solution of \eqref{u} with initial condition $u_0\in X$ that do not vanish along the direction $\varphi_1$ can be exactly controlled in time $T_R$ to the trajectory $\langle u_0,\varphi_1\rangle\psi_1(\cdot)$.

Note that if $u_0\in X$ satisfies both $u_0\in\varphi_1^\perp$ and \eqref{cone}, then we have trivially that $u_0\equiv 0$. We then choose $p\equiv 0$, so that the solution of \eqref{u} remains constantly equal to $\phi_1\equiv 0$.
\end{proof}

\section{Applications}
In this section we present some examples of parabolic equations for which Theorem \ref{teo1} can be applied. The hypotheses \eqref{ipA}--\eqref{ipB} have been verified in \cite{acu} and \cite{cu}, to which we refer for more details. Furthermore, we observe that also the global results Theorem \ref{teoglobal} and Theorem \ref{teoglobal0} can be applied to any example.
\subsection{Diffusion equation with Dirichlet boundary conditions}\label{ex1}
Let $I=(0,1)$ and $X=L^2(0,1)$. Consider the following problem
\begin{equation}\label{eqex1}\left\{\begin{array}{ll}
u_t(t,x)-u_{xx}(t,x)+p(t)\mu(x)u(t,x)=0 & x\in I,t>0 \\\\
u(t,0)=0,\,\,u(t,1)=0, & t>0\\\\
u(0,x)=u_0(x) & x\in I.
\end{array}\right.
\end{equation}
We denote by $A$ the operator defined by
\begin{equation*}
D(A)=H^2\cap H^1_0(I),\quad A\varphi=-\frac{d^2\varphi}{dx^2}.
\end{equation*}
and it can be checked that $A$ satisfies \eqref{ipA}. We indicate by $\{\lambda_k\}_{k\in\NN^*}$ and $\{\varphi_k\}_{k\in\NN^*}$ the families of eigenvalues and eigenfunctions of $A$, respectively:
\begin{equation*}
\lambda_k=(k\pi)^2,\quad \varphi_k(x)=\sqrt{2}\sin(k\pi x),\quad \forall k\in\NN^*.
\end{equation*}

It is easy to see that \eqref{gap} holds true:
\begin{equation*}
\sqrt{\lambda_{k+1}}-\sqrt{\lambda_k}=\pi,\qquad \forall k\in \NN^*.
\end{equation*}

Let $B:X\to X$ be the operator
\begin{equation*}
B\varphi=\mu\varphi
\end{equation*}
with $\mu\in H^3(I)$ such that
\begin{equation}\label{mu}
\mu'(1)\pm\mu'(0)\neq 0\quad\mbox{ and }\quad\langle\mu\varphi_1,\varphi_k\rangle\neq0\quad\forall k \in \NN^*.
\end{equation}
Then, there exists $b>0$ such that
\begin{equation*}
\lambda_k^{3/2}|\langle \mu\varphi_1,\varphi_k\rangle|\geq b,\qquad\forall k\in\NN^*.
\end{equation*}
For instance, a suitable function that satisfies \eqref{mu} is $\mu(x)=x^2$, for which $b=\frac{2\pi^2-3}{6\pi^2}$.

For any $T>0$, we define $T_f$ as in \eqref{Talpha}. Then, there exists a constant $R_{T_f}>0$ such that the solution $u$ of \eqref{eqex1}, with $u_0\in B_{R_{T_f}}(\varphi_1)$, reaches the ground state solution $\psi_1(t,x)=\sqrt{2}\sin(\pi x)e^{-\pi^2t}$ in time $T_f$ and stays on it forever.
\subsection{Diffusion equation with Neumann boundary conditions}\label{ex2}
Let $I=(0,1)$, $X=L^2(I)$ and consider the Cauchy problem
\begin{equation}\label{eqex2}
\left\{\begin{array}{ll}
u_t(t,x)-u_{xx}(t,x)+p(t)\mu(x)u(t,x)=0 & x\in I,t>0 \\\\
u_x(t,0)=0,\,\,u_x(t,1)=0, &t>0\\\\
u(0,x)=u_0(x). & x\in I.
\end{array}\right.
\end{equation}
The operator $A$, defined by
\begin{equation*}
D(A)=\{ \varphi\in H^2(0,1): \varphi'(0)=0,\,\,\varphi'(1)=0\},\quad A\varphi=-\frac{d^2\varphi}{dx^2}
\end{equation*}
satisfies \eqref{ipA} and has the following eigenvalues and eigenfunctions 
\begin{equation*}
\begin{array}{lll}
\lambda_0=0,&\varphi_0=1\\
\lambda_k=(k\pi)^2,& \varphi_k(x)=\sqrt{2}\cos(k\pi x),& \forall k\geq1.
\end{array}
\end{equation*}
Thus, the gap condition \eqref{gap} is fulfilled with $\alpha=\pi$. The ground state solution is just the stationary function $\psi_1(x)=\varphi_1(x)=1$.

We define $B:X\to X$ as the multiplication operator by a function $\mu\in H^2(I)$, $B\varphi=\mu\varphi$, such that
\begin{equation}
\mu'(1)\pm\mu'(0)\neq 0\quad\mbox{ and }\quad\langle\mu,\varphi_k\rangle\neq0\quad\forall k \in \NN.
\end{equation}
It can be proved that, there exists $b>0$ such that
\begin{equation}\label{mu2}
\lambda_k|\langle \mu\varphi_0,\varphi_k\rangle|\geq b,\qquad\forall k\in\NN^*.
\end{equation}
For example, $\mu(x)=x^2$ satisfies \eqref{mu2} with $b=2\sqrt{2}$.

Therefore, equation \eqref{eqex2} is controllable to the ground state solution $\psi_1=1$ in any time $T>0$ as long as $u_0\in B_{R_T}(1)$, with $R_T>0$ a suitable constant.
\subsection{Variable coefficient parabolic equation}\label{ex3}
Let $I=(0,1)$, $X=L^2(I)$ and consider the problem
\begin{equation}\label{eqex3}
\left\{
\begin{array}{ll}
u_t(t,x)-((1+x)^2u_x(t,x))_x+p(t)\mu(x)u(t,x)=0&x\in I,t>0\\\\
u(t,0)=0,\quad u(t,1)=0,&t>0\\\\
u(0,x)=u_0(x)&x\in I.
\end{array}
\right.
\end{equation}
We denote by $A:D(A)\subset X\to X$ the following operator
\begin{equation*}
D(A)=H^2\cap H^1_0(I),\qquad A\varphi=-((1+x)^2\varphi_x)_x.
\end{equation*}
It can be checked that $A$ satisfies \eqref{ipA} and that the eigenvalues and eigenfunctions have the following expression
\begin{equation*}
\lambda_k=\frac{1}{4}+\left(\frac{k\pi}{\ln2}\right)^2,\qquad\varphi_k=\sqrt{\frac{2}{\ln 2}}(1+x)^{-1/2}\sin\left(\frac{k\pi}{\ln2 }\ln(1+x)\right).
\end{equation*}
Furthermore, $\{\lambda_k\}_{k\in\NN^*}$ verifies the gap condition \eqref{gap} with $\alpha=\pi/\ln{2}$.

We define the operator $B:X\to X$ by $B\varphi=\mu\varphi$, where $\mu\in H^2(I)$ is such that
\begin{equation}\label{mu3}
2\mu'(1)\pm\mu'(0)\neq0,\quad\mbox{and}\quad \langle \mu\varphi_1,\varphi_k\rangle\neq0\quad\forall k \in\NN^*.
\end{equation}
Hence, thanks to \eqref{mu3}, \eqref{ipB} is fulfilled with $q=3/2$. An example of a suitable function $\mu$ that satisfies \eqref{mu3} is $\mu(x)=x$, see \cite{acu} for the verification.

Thus, from Theorem \ref{teo1}, we deduce that, for any $T>0$, system \eqref{eqex3} is controllable to the ground state solution if the initial condition $u_0$ is close enough to $\varphi_1$. 
\subsection{Diffusion equation in a $3D$ ball with radial data}\label{ex4}
In this example, we study the controllability of an evolution equation in the three dimensional unit ball $B^3$ for radial data. The bilinear control problem is the following
\begin{equation}\label{eqex4}
\left\{\begin{array}{ll}
u_t(t,r)-\Delta u(t,r)+p(t)\mu(r)u(t,r)=0 & r\in[0,1], t>0 \\\\
u(t,1)=0,&t>0\\\\
u(0,r)=u_0(r) & r\in[0,1]
\end{array}\right.
\end{equation}
where the Laplacian in polar coordinates for radial data is given by the following expression
$$\Delta\varphi(r)=\partial^2_r \varphi(r)+\frac{2}{r}\partial_r\varphi(r).$$
The function $\mu$ is a radial function as well in the space $H^3_r(B^3)$, where the spaces $H^k_r(B^3)$ are defined as follows
$$X:=L^2_{r}(B^3)=\left\{\varphi\in L^2(B^3)\,|\, \exists \psi:\RR\to\RR, \varphi(x)=\psi(|x|)\right\}$$
$$H^k_r(B^3):=H^k(B^3)\cap L^2_{r}(B^3) .$$

The domain of the Dirichlet Laplacian $A:=-\Delta$ in $X$ is $D(A)=H^2_{r}\cap H^1_0(B^3)$. We observe that $A$ satisfies hypothesis \eqref{ipA}. We denote by $\{\lambda_k\}_{k\in\NN^*}$ and $\{\varphi_k\}_{k\in\NN^*}$ the families of eigenvalues and eigenvectors of $A$, $A\varphi_k=\lambda_k\varphi_k$, namely
\begin{equation}\label{ee}\varphi_k=\frac{\sin(k\pi r)}{\sqrt{2\pi}r},\qquad\lambda_k=(k\pi)^2
\end{equation}
$\forall k\in\NN^*$, see \cite[Section 8.14]{leb}. Since the eigenvalues of $A$ are actually the same of the Dirichlet $1D$ Laplacian, \eqref{gap} is satisfied, as we have seen in Example \ref{ex1}.

Let $B:X\to X$ be the multiplication operator $Bu(t,r)=\mu(r)u(t,r)$, with $\mu$ be such that
\begin{equation}\label{mu4}
\mu'(1)\pm\mu'(0)\neq 0,\quad\mbox{and}\quad \langle \mu\varphi_1,\varphi_k\rangle\neq0\quad\forall k\in\NN^*.
\end{equation}

Then, it can be proved that 
\begin{equation}\label{mu4p}
\lambda_k^{3/2}|\langle \mu\varphi_1,\varphi_k\rangle|\geq b,\qquad \forall k\in\NN^*,
\end{equation}
with $b$ a positive constant. For instance, $\mu(x)=x^2$ verifies \eqref{mu4} and \eqref{mu4p} with $b=\frac{2\pi^2-3}{6\pi^2}$.

Therefore, by applying Theorem \ref{teo1}, we conclude that for any $T>0$, the exists a suitable constant $R_T>0$ such that, if $u_0\in B_{R_T}(\varphi_1)$, problem \eqref{eqex4} is exactly controllable to the ground state $\psi_1$ in time $T$.

\subsection{Degenerate parabolic equation}\label{ex5}
In this last section we want to address an example of a control problem for a degenerate evolution equation of the form
\begin{equation}\label{eqex5}
\left\{
\begin{array}{ll}
u_t-\left(x^{\gamma} u_x\right)_x+p(t)x^{2-\gamma}u=0,& (t,x)\in (0,+\infty)\times(0,1)\\\\
u(t,1)=0,\quad\left\{\begin{array}{ll} u(t,0)=0,& \mbox{ if }\gamma\in[0,1),\\\\ \left(x^{\gamma}u_x\right)(t,0)=0,& \mbox{ if }\gamma\in[1,3/2),\end{array}\right.\\\\
u(0,x)=u_0(x).
\end{array}
\right.
\end{equation}
where $\gamma\in[0,3/2)$ describes the degeneracy magnitude, for which Theorem \ref{teo1} applies. 

If $\gamma\in[0,1)$ problem \eqref{eqex5} is called weakly degenerate and the natural spaces for the well-posedness are the following weighted Sobolev spaces. Let $I=(0,1)$ and $X=L^2(I)$, we define
\begin{equation*}
\begin{array}{l}
H^1_{\gamma}(I)=\left\{u\in X: u \mbox{ is absolutely continuous on } [0,1], x^{\gamma/2}u_x\in X\right\}\\\\
H^1_{\gamma,0}(I)=\left\{u\in H^1_\gamma(I):\,\, u(0)=0,\,\,u(1)=0\right\}\\\\
H^2_\gamma(I)=\left\{u\in H^1_\gamma(I): x^{\gamma}u_x\in H^1(I)\right\}.
\end{array}
\end{equation*}
We denote by $A:D(A)\subset X\to X$ the linear degenerate second order operator
\begin{equation}
\left\{\begin{array}{l}
\forall u\in D(A),\quad Au:=-(x^{\gamma}u_x)_x,\\\\
D(A):=\{u\in H^1_{\gamma,0}(I),\,\, x^{\gamma}u_x\in H^1(I)\}.
\end{array}\right.
\end{equation}
It is possible to prove that $A$ satisfies \eqref{ipA} (see, for instance \cite{cmp}) and furthermore, if we denote by $\{\lambda_k\}_{k\in\NN^*}$  the eigenvalues and by $\{\varphi_k\}_{k\in\NN^*}$ the corresponding eigenfunctions, it turns out that the gap condition \eqref{gap} is fulfilled with $\alpha=\frac{7}{16}\pi$ (see \cite{kl}, page 135). 

If $\gamma\in[1,3/2)$, problem \eqref{eqex5} is called strong degenerate and the corresponding weighted Sobolev space are described as follows: given $I=(0,1)$ and $X=L^2(I)$, we define
\begin{equation*}
\begin{array}{l}
H^1_{\gamma}(I)=\left\{u\in X: u \mbox{ is absolutely continuous on } (0,1],\,\, x^{\gamma/2}u_x\in X\right\}\vspace{.1cm}\\\\
H^1_{\gamma,0}(I):=\left\{u\in H^1_{\gamma}(I):\,\,u(1)=0\right\},\vspace{.1cm}\\\\
H^2_\gamma(I)=\left\{u\in H^1_\gamma(I):\,\, x^{\gamma}u_x\in H^1(I)\right\}.
\end{array}
\end{equation*}
In this case the operator $A:D(A)\subset X\to X$ is defined by
\begin{equation*}
\left\{\begin{array}{l}
\forall u\in D(A),\quad Au:=-(x^{\gamma}u_x)_x,\vspace{.1cm}\\\\
D(A):=\left\{u\in H^1_{\gamma,0}(I):\,\, x^{\gamma}u_x\in H^1(I)\right\}\vspace{.1cm}\\
\qquad\,\,\,\,\,=\left\{u\in X:\,\,u \mbox{ is absolutely continuous in (0,1] },\,\, x^{\gamma}u\in H^1_0(I),\right.\vspace{.1cm}\\
\qquad\qquad\,\,\,\left.x^{\gamma}u_x\in H^1(I)\mbox{ and } (x^{\gamma}u_x)(0)=0\right\}
\end{array}\right.
\end{equation*}
and it has been proved that \eqref{ipA} holds true (see, for instance \cite{cmvn}) and that \eqref{gap} is satisfied for $\alpha=\frac{\pi}{2}$ (see \cite{kl}).

For all $\gamma\in[0,3/2)$, we define the linear operator $B:X\to X$ by $Bu(t,x)=x^{2-\gamma}u(t,x)$ and in \cite{cu} we have proved that there exists a constant $b>0$ such that
\begin{equation*}
\lambda_k^{3/2}|\langle B\varphi_1,\varphi_k\rangle|\geq b\quad\forall k\in\NN^*.
\end{equation*}

Finally, by applying Theorem \ref{teo1}, we ensure the exact controllability of problem \eqref{eqex5} to the ground state solution, for both weakly and strongly degenerate problems.
\section*{Acknowledgments}
We are grateful to J. M. Coron and P. Martinez for their precious comments and suggestions. 
\section*{References}
\bibliographystyle{plain}
\bibliography{biblio}
\end{document}